 \newtheorem{theorem}{\sc\bf Theorem}[section]
 \newtheorem{corollary}[theorem]{\sc\bf Corollary}
 \newtheorem{lemma}[theorem]{\sc\bf Lemma}
 \newtheorem{definition}[theorem]{\sc\bf Definition}
 \newtheorem{remark}[theorem]{\sc\bf Remark}
 \newtheorem{question}[theorem]{\sc\bf Question}
 \newtheorem{example}[theorem]{\sc\bf Example}
  \numberwithin{equation}{section}
\def\@cite#1#2{#1\if@tempswa , #2\fi}
\title{{\bf Spectra originated from semi-B-Fredholm theory and commuting perturbations}
\thanks{This work has been supported by National Natural Science Foundation of China (11171066), Specialized Research Fund for the Doctoral
Program of Higher Education (2010350311001, 20113503120003), Natural
Science Foundation of Fujian Province (2009J01005, 2011J05002) and
Foundation of the Education Department of Fujian Province (JB10042,
JA08036).} }
\author{Qingping \textsc{Zeng},\thanks{Email address: zqpping2003@163.com.}
  \ \ Qiaofen \textsc{Jiang} and Huaijie \textsc{Zhong}
\\ \small (School of Mathematics and Computer Science, Fujian Normal
University, Fuzhou 350007, P.R. China) }
\begin{document}
\date{}
\maketitle

\large

\begin{quote}
 {\bf Abstract:} ~In [\cite{Burgos-Kaidi-Mbekhta-Oudghiri}], Burgos, Kaidi, Mbekhta
 and Oudghiri provided an
 affirmative answer to a question of Kaashoek and Lay and proved that
 an operator $F$ is power finite rank if and only
 if $\sigma_{dsc}(T+F) =\sigma_{dsc}(T)$ for every operator $T$ commuting with
 $F$. Later, several authors extended this result to the
 essential descent spectrum, the left Drazin spectrum and the left essentially Drazin spectrum.
 In this paper, using the theory of
operator with eventual topological uniform descent and the technique
used in [\cite{Burgos-Kaidi-Mbekhta-Oudghiri}], we generalize this
result to various spectra originated from seni-B-Fredholm theory. As
immediate consequences, we give affirmative answers to several
questions posed by Berkani, Amouch and Zariouh. Besides, we provide
a general framework which allows us to derive in a unify way
commuting perturbational results of Weyl-Browder type theorems and
properties (generalized or not). These commuting perturbational
results, in particular, improve many recent results of
[\cite{Berkani-Amouch}, \cite{Berkani-Zariouh partial},
\cite{Berkani Zariouh}, \cite{Berkani Zariouh Functional Analysis},
\cite{Rashid gw}] by removing certain extra assumptions.
    \\
{\bf  2010 Mathematics Subject Classification:} primary 47A10, 47A11; secondary 47A53, 47A55   \\
{\bf Key words:} Semi-B-Fredholm operators; eventual topological
uniform descent; power finite rank; commuting perturbation.
\end{quote}

\section{Introduction }

 \quad\,~In 1972, Kaashoek and Lay have shown in [\cite{Kaashoek-Lay}] that the descent spectrum
is invariant under commuting  power finite rank perturbation $F$
(that is, $F^{n}$ is finite rank for some $n \in \mathbb{N}$). Also
they have conjectured that this perturbation property characterizes
such operators $F$. In 2006, Burgos, Kaidi, Mbekhta and Oudghiri
 provided in [\cite{Burgos-Kaidi-Mbekhta-Oudghiri}] an
 affirmative answer to this question and proved that
 an operator $F$ is power finite rank if and only
 if $\sigma_{dsc}(T+F) =\sigma_{dsc}(T)$ for every operator $T$ commuting with
 $F$. Later, Fredj generalized this result in [\cite{Bel}] to the
 essential descent spectrum. Fredj, Burgos and Oudghiri extended this result in [\cite{Bel-Burgos-Oudghiri 3}] to
the left Drazin spectrum and the left essentially Drazin spectrum.
The present paper is concern with commuting power finite rank
perturbations of semi-B-Fredholm operators. As seen in Theorem
\ref{2.19} (i.e., main result), we generalize the previous results
to various spectra originated from semi-B-Fredholm theory. The proof
of our main result is mainly dependent upon the theory of operator
with eventual topological uniform descent and the technique used in
[\cite{Burgos-Kaidi-Mbekhta-Oudghiri}].

Spectra originated from semi-B-Fredholm theory include, in
particular, the upper semi-B-Weyl spectrum $\sigma_{USBW}$ (resp.
the B-Weyl spectrum $\sigma_{BW}$) which is closely related to
generalized a-Weyl's theorem, generalized a-Browder's theorem,
property $(gw)$ and property $(gb)$ (resp. generalized Weyl's
theorem, generalized Browder's theorem, property $(gaw)$ and
property $(gab)$). Concerning the upper semi-B-Weyl spectrum
$\sigma_{USBW}$, Berkani and Amouch posed in [\cite{Berkani-Amouch}]
the following question:

\begin {question}${\label{1.1}}$
 Let $T \in \mathcal{B}(X)$ and let $N \in \mathcal{B}(X)$ be a
 nilpotent operator commuting with $T$. Do we always have
$$ \sigma_{USBW}(T+N) = \sigma_{USBW}(T) \  ?$$
\end{question}

   Similarly, for the B-Weyl spectrum $\sigma_{BW}$, Berkani and Zariouh posed in [\cite{Berkani
Zariouh}] the following question:

\begin {question}${\label{1.2}}$
 Let $T \in \mathcal{B}(X)$ and let $N \in \mathcal{B}(X)$ be a
 nilpotent operator commuting with $T$. Do we always have
$$ \sigma_{BW}(T+N) = \sigma_{BW}(T) \  ?$$
\end{question}

Recently, Amouch, Zguitti, Berkani and Zariouh have given partial
answers in [\cite{Amouch partial}, \cite{AmouchM ZguittiH},
\cite{Berkani-Amouch}, \cite{Berkani-Zariouh partial}] to Question
\ref{1.1}. As immediate consequences of our main result (see Theorem
\ref{2.19}), we provide positive answers to Questions \ref{1.1} and
\ref{1.2} and some other questions posed by Berkani and Zariouh (see
Corollaries \ref{3.2}, \ref{3.4} and \ref{3.9}). Besides, we provide
a general framework which allows us to derive in a unify way
commuting perturbational results of Weyl-Browder type theorems and
properties (generalized or not). These commuting perturbational
results, in particular, improve many recent results of
[\cite{Berkani-Amouch}, \cite{Berkani-Zariouh partial},
\cite{Berkani Zariouh}, \cite{Berkani Zariouh Functional Analysis},
\cite{Rashid gw}] by removing certain extra assumptions (see
Corollary \ref{3.10} and Remark \ref{3.11}).

\bigskip

 Throughout this paper, let $\mathcal{B}(X)$ denote the Banach
algebra of all bounded linear operators acting on an infinite
dimensional
 complex Banach space $X$, and $\mathcal{F}(X)$ denote its ideal of finite rank operators on $X$.
For an operator $T \in \mathcal{B}(X)$, let $T^{*}$ denote its dual,
$\mathcal {N}(T)$ its kernel, $\alpha(T)$ its nullity, $\mathcal
{R}(T)$ its range, $\beta(T)$ its defect, $\sigma(T)$ its spectrum
and $\sigma_{a}(T)$ its approximate point spectrum. If the range
$\mathcal {R}(T)$ is closed and $\alpha(T) < \infty$ (resp.
$\beta(T) < \infty$), then $T$ is said to be $upper$
$semi$-$Fredholm$ (resp. $lower$ $semi$-$Fredholm$). If $T \in
\mathcal{B}(X)$ is both upper and lower semi-Fredholm, then $T$ is
said to be $Fredholm$. If $T \in \mathcal{B}(X)$ is either upper or
lower semi-Fredholm, then $T$ is said to be $semi$-$Fredholm$, and
its index is defined by
\begin{upshape}ind\end{upshape}$(T)$ = $\alpha(T)-\beta(T)$.

For each $n \in \mathbb{N}$, we set $c_{n}(T) = \dim \mathcal
{R}(T^{n})/\mathcal {R}(T^{n+1})$ and $c^{'}_{n}(T) = \dim \mathcal
{N}(T^{n+1})/\mathcal {N}(T^{n}).$ It follows from
 [\cite{Kaashoek M A 12}, Lemmas 3.1 and 3.2] that, for every $n \in \mathbb{N}$,
$$c_{n}(T) = \dim X / (\mathcal {R}(T) + \mathcal {N}(T^{n})), \ \ \ \  c^{'}_{n}(T) =  \dim \mathcal {N}(T)
\cap \mathcal {R}(T^{n}).$$ Hence, it is easy to see that the
sequences $\{c_{n}(T)\}_{n=0}^{\infty}$ and
$\{c^{'}_{n}(T)\}_{n=0}^{\infty}$ are decreasing. Recall that the
$descent$ and the $ascent$ of $T \in \mathcal{B}(X)$ are
 $dsc(T)= \inf \{n \in \mathbf{\mathbb{N}}:\mathcal {R}(T^{n})= \mathcal {R}(T^{n+1})\}$
 and $asc(T)=\inf \{n \in \mathbf{\mathbb{N}}:\mathcal {N}(T^{n})= \mathcal {N}(T^{n+1})\}$,
 respectively (the infimum of an empty set is defined to be $\infty$). That is,
 $$dsc(T)=\inf \{n \in \mathbf{\mathbb{N}}:c_{n}(T) = 0 \}$$
 and
 $$asc(T)=\inf \{n \in \mathbf{\mathbb{N}}:c^{'}_{n}(T) =0 \}.$$
 Similarly, the $esential$ $descent$ and the $esential$ $ascent$ of $T \in
\mathcal{B}(X)$ are
$$dsc_{e}(T)= \inf \{n \in \mathbf{\mathbb{N}}:c_{n}(T) <
\infty \}$$
 and $$asc_{e}(T)=\inf \{n \in
\mathbf{\mathbb{N}}:c^{'}_{n}(T) < \infty \}.$$  If $asc(T) <
\infty$ and $\mathcal {R}(T^{asc(T)+1})$ is closed, then $T$ is said
to be $left$ $Drazin$ $invertible$. If $dsc(T) < \infty $ and
$\mathcal {R}(T^{dsc(T)})$ is closed, then $T$ is said to be $right$
$Drazin$ $invertible$. If $asc(T) = dsc(T) < \infty $, then $T$ is
said to be $Drazin$ $invertible$. Clearly, $T \in \mathcal{B}(X)$ is
both left and right Drazin invertible if and only if $T$ is Drazin
invertible. If $asc_{e}(T) < \infty$ and $\mathcal
{R}(T^{asc_{e}(T)+1})$ is closed, then $T$ is said to be $left$
$essentially$ $Drazin$ $invertible$. If $dsc_{e}(T) < \infty $ and
$\mathcal {R}(T^{dsc_{e}(T)})$ is closed, then $T$ is said to be
$right$ $essentially$ $Drazin$ $invertible$.

For $T \in \mathcal{B}(X)$, let us define the $left$ $Drazin$
$spectrum$, the $right$ $Drazin$ $spectrum$, the $Drazin$
$spectrum$, the $left$ $essentially$ $Drazin$ $spectrum$, and the
$right$ $essentially$ $Drazin$ $spectrum$ of $T$ as follows
respectively:
$$ \sigma_{LD}(T) = \{ \lambda \in \mathbb{C}: T - \lambda I \makebox{ is not a left Drazin invertible operator} \};$$
$$ \sigma_{RD}(T) = \{ \lambda \in \mathbb{C}: T - \lambda I \makebox{ is not a right Drazin invertible operator} \};$$
$$ \sigma_{D}(T) = \{ \lambda \in \mathbb{C}: T - \lambda I \makebox{ is not a Drazin invertible operator} \};$$
$$ \sigma_{LD}^{e}(T) = \{ \lambda \in \mathbb{C}: T - \lambda I \makebox{ is not a left essentially Drazin invertible operator} \};$$
$$ \sigma_{RD}^{e}(T) = \{ \lambda \in \mathbb{C}: T - \lambda I \makebox{ is not a right essentially Drazin invertible operator} \}.$$
These spectra have been extensively studied by several authors, see
e.g [\cite{Aiena-Biondi-Carpintero}, \cite{AmouchM ZguittiH},
\cite{Berkani}, \cite{Berkani2},
\cite{Carpintero-Garcia-Rosas-Sanabria}, \cite{Bel},
\cite{Bel-Burgos-Oudghiri 3}, \cite{Mbekhta-Muller 9}].

Recall that an operator $T \in \mathcal{B}(X)$ is said to be
$Browder$ (resp. $upper$ $semi$-$Browder$, $lower$ $semi$-$Browder$)
if $T$ is Fredholm and $asc(T)=dsc(T) < \infty$ (resp. $T$ is upper
semi-Fredholm and $asc(T) < \infty$, $T$ is lower semi-Fredholm and
$dsc(T) < \infty$).

For each integer $n$, define $T_{n}$ to be the restriction of $T$ to
$\mathcal{R}(T^{n})$ viewed as the map from $\mathcal{R}(T^{n})$
into $\mathcal{R}(T^{n})$ (in particular $T_{0} = T $). If there
exists $n \in \mathbb{N}$ such that $\mathcal {R}(T^{n})$ is closed
and $T_{n}$ is Fredholm (resp. upper semi-Fredholm, lower
semi-Fredholm, Browder, upper semi-Browder, lower semi-Browder),
then $T$ is called $B$-$Fredholm$ (resp. $upper$
$semi$-$B$-$Fredholm$, $lower$ $semi$-$B$-$Fredholm$, $B$-$Browder$,
$upper$ $semi$-$B$-$Browder$, $lower$ $semi$-$B$-$Browder$). If $T
\in \mathcal {B}(X)$ is upper or lower semi-B-Browder, then $T$ is
called $semi$-$B$-$Browder$. If $T \in \mathcal {B}(X)$ is upper or
lower semi-B-Fredholm, then $T$ is called $semi$-$B$-$Fredholm$. It
follows from [\cite{Berkani semi-B-fredholm}, Proposition 2.1] that
if there exists $n \in \mathbb{N}$ such that $\mathcal {R}(T^{n})$
is closed and $T_{n}$ is
 semi-Fredholm, then $\mathcal {R}(T^{m})$ is closed, $T_{m}$ is semi-Fredholm and
\begin{upshape}ind\end{upshape}$(T_{m})$ = \begin{upshape}ind\end{upshape}$(T_{n})$ for all $m \geq
n$. This enables us to define the index of a semi-B-Fredholm
operator $T$ as the index of the semi-Fredholm operator $T_{n}$,
where $n$ is an integer satisfying $\mathcal {R}(T^{n})$ is closed
and $T_{n}$ is semi-Fredholm. An operator $T \in \mathcal {B}(X)$ is
called $B$-$Weyl$ (resp. $upper$ $semi$-$B$-$Weyl$, $lower$
$semi$-$B$-$Weyl$) if $T$ is B-Fredholm and
\begin{upshape}ind\end{upshape}$(T)=0$ (resp. $T$ is upper
semi-B-Fredholm and
\begin{upshape}ind\end{upshape}$(T) \leq 0$, $T$ is
lower semi-B-Fredholm and
\begin{upshape}ind\end{upshape}$(T) \geq 0$). If $T
\in \mathcal {B}(X)$ is upper or lower semi-B-Weyl, then $T$ is
called $semi$-$B$-$Weyl$.

For $T \in \mathcal{B}(X)$, let us define the $upper$
$semi$-$B$-$Fredholm$ $spectrum$, the $lower$ $semi$-$B$-$Fredholm$
$spectrum$, the $semi$-$B$-$Fredholm$ $spectrum$, the $B$-$Fredholm$
$spectrum$, the $upper$ $semi$-$B$-$Weyl$ $spectrum$, the $lower$
$semi$-$B$-$Weyl$ $spectrum$, the $semi$-$B$-$Weyl$ $spectrum$, the
$B$-$Weyl$ $spectrum$, the $upper$ $semi$-$B$-$Browder$ $spectrum$,
the $lower$ $semi$-$B$-$Browder$ $spectrum$, the
$semi$-$B$-$Browder$ $spectrum$, and the $B$-$Browder$ $spectrum$ of
$T$ as follows respectively:
$$ \sigma_{USBF}(T) = \{ \lambda \in \mathbb{C}: T - \lambda I \makebox{ is not a upper semi-B-Fredholm
operator} \};$$
$$ \sigma_{LSBF}(T) = \{ \lambda \in \mathbb{C}: T - \lambda I \makebox{ is not a lower semi-B-Fredholm
operator} \};$$
$$ \sigma_{SBF}(T) = \{ \lambda \in \mathbb{C}: T - \lambda I \makebox{ is not a semi-B-Fredholm
operator} \};$$
$$ \sigma_{BF}(T) = \{ \lambda \in \mathbb{C}: T - \lambda I \makebox{ is not a B-Fredholm
operator} \};$$
$$ \sigma_{USBW}(T) = \{ \lambda \in \mathbb{C}: T - \lambda I \makebox{ is not a upper semi-B-Weyl
operator} \};$$
$$ \sigma_{LSBW}(T) = \{ \lambda \in \mathbb{C}: T - \lambda I \makebox{ is not a lower semi-B-Weyl
operator} \};$$
$$ \sigma_{SBW}(T) = \{ \lambda \in \mathbb{C}: T - \lambda I \makebox{ is not a semi-B-Weyl
operator} \};$$
$$ \sigma_{BW}(T) = \{ \lambda \in \mathbb{C}: T - \lambda I \makebox{ is not a B-Weyl
operator} \};$$
$$ \sigma_{USBB}(T) = \{ \lambda \in \mathbb{C}: T - \lambda I \makebox{ is not a upper semi-B-Browder
operator} \};$$
$$ \sigma_{LSBB}(T) = \{ \lambda \in \mathbb{C}: T - \lambda I \makebox{ is not a lower semi-B-Browder
operator} \};$$
$$ \sigma_{SBB}(T) = \{ \lambda \in \mathbb{C}: T - \lambda I \makebox{ is not a semi-B-Browder
operator} \};$$
$$ \sigma_{BB}(T) = \{ \lambda \in \mathbb{C}: T - \lambda I \makebox{ is not a B-Browder
operator} \}.$$ These spectra originated from semi-B-Fredholm theory
also have been extensively studied by several authors,  see e.g
[\cite{Aiena-Biondi-Carpintero}, \cite{AmouchM ZguittiH},
\cite{Berkani}, \cite{Berkani pams}, \cite{Berkani semi-B-fredholm},
\cite{Berkani Zariouh}, \cite{Carpintero-Garcia-Rosas-Sanabria}].

For any $T \in \mathcal{B}(X)$, Berkani have found in
[\cite{Berkani}, Theorem 3.6] the following elegant equalities:
$$ \sigma_{LD}(T) = \sigma_{USBB}(T), \ \ \ \  \sigma_{RD}(T) = \sigma_{LSBB}(T);$$
$$ \sigma_{LD}^{e}(T) = \sigma_{USBF}(T), \ \ \ \  \sigma_{RD}^{e}(T) = \sigma_{LSBF}(T);$$
$$ \sigma_{D}(T) = \sigma_{BB}(T).$$

This paper is organized as follows. In Section 2, by using the
theory of operator with eventual topological uniform descent and the
technique used in [\cite{Burgos-Kaidi-Mbekhta-Oudghiri}], we
characterize power finite rank operators via various spectra
originated from seni-B-Fredholm theory. In Section 3, as some
applications, we provide affirmative answers to some questions of
Berkani, Amouch and Zariouh. Besides, we provide a general framework
which allows us to derive in a unify way commuting perturbational
results of Weyl-Browder type theorems and properties (generalized or
not). These commuting perturbational results, in particular, improve
many recent results of [\cite{Berkani-Amouch}, \cite{Berkani-Zariouh
partial}, \cite{Berkani Zariouh}, \cite{Berkani Zariouh Functional
Analysis}, \cite{Rashid gw}] by removing certain extra assumptions.

\section{Main result}

\quad\,~ We begin with the following lemmas in order to give the
proof of the main result in this paper.

\begin {lemma}${\label{2.7}}$ Let $F \in \mathcal {B}(X)$ with $F^{n} \in \mathcal {F}(X)$ for some $n \in \mathbb{N}$.
If $T \in \mathcal {B}(X)$ is upper semi-B-Fredholm and commutes
with $F$, then $T+F$ is also upper semi-B-Fredholm.
\end{lemma}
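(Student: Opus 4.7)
The plan is to exploit the invariance of $\mathcal{R}(T^{d})$ under both $T$ and $F$ (for $d$ witnessing the upper semi-B-Fredholm property of $T$), reduce the problem to a commuting Riesz perturbation of an upper semi-Fredholm operator on $\mathcal{R}(T^{d})$, and then lift the conclusion back to $X$ via a binomial expansion of $(T+F)^{p}$.

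First, since $T$ is upper semi-B-Fredholm, pick $d \geq n$ such that $\mathcal{R}(T^{d})$ is closed and $T_{d}$ is upper semi-Fredholm. The commutation $TF=FT$ gives $F(\mathcal{R}(T^{d})) \subseteq \mathcal{R}(T^{d})$, so $F_{d} := F|_{\mathcal{R}(T^{d})}$ is a bounded operator on $\mathcal{R}(T^{d})$ commuting with $T_{d}$ and satisfying $F_{d}^{n} = F^{n}|_{\mathcal{R}(T^{d})} \in \mathcal{F}(\mathcal{R}(T^{d}))$. Thus $F_{d}$ is a Riesz operator, and by the classical stability of the upper semi-Fredholm class under commuting Riesz perturbations, $T_{d}+F_{d} = (T+F)|_{\mathcal{R}(T^{d})}$ is upper semi-Fredholm on $\mathcal{R}(T^{d})$.

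Next, using $TF=FT$ I expand via the binomial theorem
\[
(T+F)^{p} \;=\; T^{p} \;+\; \sum_{j=1}^{n-1} \binom{p}{j} T^{p-j} F^{j} \;+\; \sum_{j=n}^{p} \binom{p}{j} T^{p-j} F^{j}.
\]
The last sum is a finite rank operator (every term contains $F^{n}$ as a factor), while the middle sum has range contained in $\mathcal{R}(T^{p-n+1})$. Choosing $p \geq d+n-1$ ensures $\mathcal{R}(T^{p-n+1}) \subseteq \mathcal{R}(T^{d})$, so $\mathcal{R}((T+F)^{p}) \subseteq \mathcal{R}(T^{d}) + \mathcal{F}_{1}$ for some finite-dimensional $\mathcal{F}_{1}$. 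The symmetric expansion $T^{p}=((T+F)-F)^{p}$ yields the dual inclusion $\mathcal{R}(T^{p}) \subseteq \mathcal{R}((T+F)^{p-n+1}) + \mathcal{F}_{2}$ for some finite-dimensional $\mathcal{F}_{2}$.

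Combining the two inclusions, $\mathcal{R}((T+F)^{p})$ and $\mathcal{R}(T^{d})$ agree modulo finite-dimensional subspaces. Since $\mathcal{R}(T^{d})$ is closed, this forces $\mathcal{R}((T+F)^{p})$ to be closed; moreover $(T+F)_{p}$ differs from the already upper semi-Fredholm operator $T_{d}+F_{d}$ by a finite-rank modification on a finite-codimensional invariant subspace, so $(T+F)_{p}$ is itself upper semi-Fredholm, and hence $T+F$ is upper semi-B-Fredholm. The main obstacle is precisely the careful comparison of $\mathcal{R}((T+F)^{p})$ and $\mathcal{R}(T^{d})$, controlling both closedness and upper semi-Fredholm behavior under the finite-dimensional discrepancies; this is where Grabiner's theory of eventual topological uniform descent plays its role, ensuring that the ranges $\mathcal{R}((T+F)^{k})$ stabilize up to finite dimensions for large $k$ and closing the gap between the two one-sided inclusions.
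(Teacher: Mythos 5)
The paper's own proof is a two-line reduction: by [Berkani, Theorem 3.6] upper semi-B-Fredholm is the same as left essentially Drazin invertible, and the stability of that class under commuting power-finite-rank perturbations is exactly [Bel Hadj Fredj--Burgos--Oudghiri, Proposition 3.1]. What you are doing is attempting to reprove that proposition from scratch. Your first paragraph is sound: $\mathcal{R}(T^{d})$ is $(T+F)$-invariant, $F_{d}$ is Riesz on it, and commuting Riesz perturbation theory gives that $(T+F)|_{\mathcal{R}(T^{d})}$ is upper semi-Fredholm. The problem is the passage from this restriction to the restriction that the definition actually requires, namely $(T+F)_{p}$ acting on $\mathcal{R}((T+F)^{p})$.

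The fatal step is ``Combining the two inclusions, $\mathcal{R}((T+F)^{p})$ and $\mathcal{R}(T^{d})$ agree modulo finite-dimensional subspaces.'' The binomial expansions only give
$\mathcal{R}((T+F)^{p}) \subseteq \mathcal{R}(T^{p-n+1}) + \mathcal{F}_{1}$ and $\mathcal{R}(T^{p}) \subseteq \mathcal{R}((T+F)^{p-n+1}) + \mathcal{F}_{2}$,
with mismatched exponents on both sides. To get mutual inclusion up to finite dimensions you would also need $\dim \mathcal{R}(T^{d})/\mathcal{R}(T^{p}) < \infty$ and $\dim \mathcal{R}((T+F)^{p-n+1})/\mathcal{R}((T+F)^{p}) < \infty$. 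Neither is available: an upper semi-B-Fredholm operator controls only the nullities $\alpha(T_{m})$, while the defects $\beta(T_{m}) = \dim \mathcal{R}(T^{m})/\mathcal{R}(T^{m+1})$ may all be infinite, so consecutive power ranges of $T$ (and of $T+F$) can differ by infinitely many dimensions; the claimed ``agreement'' is simply not a consequence of what you have proved. The two subsequent deductions inherit this gap and have independent problems besides: a subspace of finite codimension inside a closed subspace need not be closed (kernel of a discontinuous functional), so closedness of $\mathcal{R}((T+F)^{p})$ would not follow even from a correct finite-dimensional comparison without using the operator-range structure; and $(T+F)_{p}$ and $T_{d}+F_{d}$ live on different subspaces, so calling one a ``finite-rank modification'' of the other is not meaningful without an argument comparing restrictions to almost-equal invariant subspaces. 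The closing sentence, which defers all of this to ``Grabiner's theory,'' is a placeholder rather than a proof: the correct way to close the gap is to compare hyperranges and the quantities $k_{m}$, $c_{m}$, $c^{'}_{m}$ for large $m$ (as in Lemma 2.14 of this paper and in the cited Proposition 3.1), not finite power ranges at fixed exponents.
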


\begin{proof} Since $T$ is upper semi-B-Fredholm, by [\cite{Berkani}, Theorem
3.6], $T$ is left essentially Drazin invertible. Hence by
[\cite{Bel-Burgos-Oudghiri 3}, Proposition 3.1], $T+F$ is left
essentially Drazin invertible. By [\cite{Berkani}, Theorem 3.6]
again, $T$ is upper semi-B-Fredholm.
\end{proof}

\begin{lemma}${\label{2.10}}$ Let $F \in \mathcal {B}(X)$ with $F^{n} \in \mathcal {F}(X)$ for some $n \in \mathbb{N}$.
If $T \in \mathcal {B}(X)$ is lower semi-B-Fredholm and commutes
with $F$, then $T+F$ is also lower semi-B-Fredholm.
\end{lemma}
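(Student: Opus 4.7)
The plan is to mirror the proof of Lemma 2.7 verbatim, using the lower/right analogue of Berkani's identification. By the equality $\sigma_{LSBF}(T) = \sigma_{RD}^{e}(T)$ from [Berkani, Theorem 3.6] quoted in the introduction, the hypothesis that $T$ is lower semi-B-Fredholm is equivalent to $T$ being right essentially Drazin invertible, i.e.\ $dsc_{e}(T) < \infty$ together with closedness of $\mathcal{R}(T^{dsc_{e}(T)})$. So the task reduces to proving that right essential Drazin invertibility is preserved by commuting perturbations $F$ with $F^{n} \in \mathcal{F}(X)$, after which another application of Berkani's equality returns us to the lower semi-B-Fredholm side and yields $T+F \in \sigma_{LSBF}(X)^{c}$.

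For the preservation step, the cleanest route is to invoke the analogue of [Bel-Burgos-Oudghiri 3, Proposition 3.1] for the right essentially Drazin class; the cited paper treats both the left and right versions in parallel, since essential ascent and essential descent are handled symmetrically in that framework. If one wishes to avoid importing an unstated variant, there is a convenient duality fallback: passing to adjoints, $T^{*}$ commutes with $F^{*}$, $(F^{*})^{n} = (F^{n})^{*} \in \mathcal{F}(X^{*})$, and $T$ is lower semi-B-Fredholm iff $T^{*}$ is upper semi-B-Fredholm (since ascent/descent, range closedness, and the Fredholm indices swap under dualization on each $\mathcal{R}(T^{n})$). Then Lemma~2.7 applied to $T^{*}$ gives that $(T+F)^{*} = T^{*} + F^{*}$ is upper semi-B-Fredholm, which by the same duality shows $T+F$ is lower semi-B-Fredholm.

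The main obstacle, if any, is book-keeping the duality on the iterated ranges $\mathcal{R}(T^{n})$: one must verify that the restriction $(T+F)_{n}$ being lower semi-Fredholm corresponds, under adjointing, to $(T^{*}+F^{*})_{n}$ being upper semi-Fredholm, and that the relevant range remains closed. This is standard but requires checking that the integer $n$ witnessing the semi-B-Fredholm property of $T$ works simultaneously for $T+F$; the theory of eventual topological uniform descent, referenced repeatedly in the paper, guarantees such a common stabilization index once one adds a commuting perturbation with $F^{n} \in \mathcal{F}(X)$, so no surprises are expected.
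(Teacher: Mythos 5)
Your duality fallback is exactly the paper's proof: pass to adjoints, note $(F^{*})^{n}=(F^{n})^{*}\in\mathcal{F}(X^{*})$ and that $T^{*}$ commutes with $F^{*}$, use the equivalence (via [Berkani, Theorem 3.6] together with the left/right duality recorded before Section IV of [Mbekhta--M\"{u}ller]) between $T$ being lower semi-B-Fredholm, i.e.\ right essentially Drazin invertible, and $T^{*}$ being left essentially Drazin invertible, i.e.\ upper semi-B-Fredholm, then apply the left-hand preservation result ([Bel--Burgos--Oudghiri, Proposition 3.1], which is also what underlies Lemma \ref{2.7}) and dualize back. Your primary route --- invoking a ``right essentially Drazin'' analogue of that Proposition directly --- is shakier: the cited reference treats ascent and essential ascent only, so no descent-side variant is actually stated there and you would be importing an unproved claim; the duality argument is the one to keep. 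Finally, the worry about a common stabilization index $n$ is a non-issue, since the duality is applied at the level of the operator classes through the essential-Drazin characterization rather than by matching the integer witnessing semi-B-Fredholmness.
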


\begin{proof} Since $F^{n} \in \mathcal {F}(X)$ for some $n \in
\mathbb{N}$, $\mathcal{R}(F^{n})$ is a closed and finite-dimensional
 subspace, and hence $\dim \mathcal{R}(F^{*n})=\dim \mathcal
 {N}(F^n)^{\perp} = \dim \mathcal{R}(F^{n})$, thus
 $\mathcal{R}(F^{*n})$ is finite-dimensional, this infers that $F^{*n} \in \mathcal {F}(X^*).$
It is obvious that $T^*$ commutes with $F^*$. Since $T$ is lower
semi-B-Fredholm, by [\cite{Berkani}, Theorem 3.6], $T$ is right
essentially Drazin invertible. Then from the presentation before
Section IV of [\cite{Mbekhta-Muller 9}], it follows that $T^{*}$ is
left essentially Drazin invertible. Hence by
[\cite{Bel-Burgos-Oudghiri 3}, Proposition 3.1],
$(T+F)^{*}=T^{*}+F^{*}$ is left essentially Drazin invertible. From
the presentation before Section IV of [\cite{Mbekhta-Muller 9}]
again, it follows that $T+F$ is right essentially Drazin invertible.
Consequently, by [\cite{Berkani}, Theorem 3.6] again, $T+F$ is lower
semi-B-Fredholm.
\end{proof}

It follows from [\cite{Berkani}, Corollary 3.7 and Theorem 3.6] that
$T$ is B-Fredholm if and only if $T$ is both upper and lower
semi-B-Fredholm.

\begin {corollary}${\label{2.11}}$ Let $T \in \mathcal{B}(X)$ and let $F \in \mathcal {B}(X)$ with $F^{n} \in \mathcal {F}(X)$ for some $n \in
\mathbb{N}$. If $T$ commutes with $F$, then

$(1)$ $\sigma_{USBF}(T+F) = \sigma_{USBF}(T);$

$(2)$ $\sigma_{LSBF}(T+F) = \sigma_{LSBF}(T);$

$(3)$ $\sigma_{SBF}(T+F) = \sigma_{SBF}(T);$

$(4)$ $\sigma_{BF}(T+F) = \sigma_{BF}(T).$
\end{corollary}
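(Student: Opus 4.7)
The plan is to reduce each spectral equality to a statement about preservation of a single operator class under the perturbation $F$, which is exactly what Lemmas \ref{2.7} and \ref{2.10} supply. For any $\lambda \in \mathbb{C}$, write $(T+F)-\lambda I = (T-\lambda I)+F$, and note that $T-\lambda I$ still commutes with $F$. So each equivalence $\lambda \notin \sigma_{\bullet}(T) \Leftrightarrow \lambda \notin \sigma_{\bullet}(T+F)$ reduces to showing that the class $\bullet$ is stable under the commuting perturbation by $F$.

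For part (1), if $\lambda \notin \sigma_{USBF}(T)$, then $T-\lambda I$ is upper semi-B-Fredholm and commutes with $F$, so Lemma \ref{2.7} yields that $(T-\lambda I)+F=(T+F)-\lambda I$ is upper semi-B-Fredholm, giving the inclusion $\sigma_{USBF}(T+F) \subseteq \sigma_{USBF}(T)$. For the reverse inclusion, I would observe that $T=(T+F)+(-F)$, and that $(-F)^{n}=(-1)^{n}F^{n} \in \mathcal{F}(X)$ while $T+F$ clearly commutes with $-F$; applying Lemma \ref{2.7} again to $(T+F)-\lambda I$ perturbed by $-F$ gives the opposite inclusion. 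Part (2) is identical, using Lemma \ref{2.10} in place of Lemma \ref{2.7}.

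Parts (3) and (4) follow purely set-theoretically from parts (1) and (2) together with the characterizations of the two-sided classes. Since $T$ is semi-B-Fredholm iff it is upper semi-B-Fredholm or lower semi-B-Fredholm, we have $\sigma_{SBF}(\cdot) = \sigma_{USBF}(\cdot) \cap \sigma_{LSBF}(\cdot)$, so (3) is immediate from (1) and (2). For (4), I would use the observation made in the excerpt just before the corollary: $T$ is B-Fredholm iff it is both upper and lower semi-B-Fredholm, hence $\sigma_{BF}(\cdot) = \sigma_{USBF}(\cdot) \cup \sigma_{LSBF}(\cdot)$, and (4) again follows.

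There is essentially no obstacle beyond the verification of the two lemmas, which are already in place; the only thing to be careful about is to invoke both directions via the symmetry $T \leftrightarrow T+F$, which is harmless because the class of power-finite-rank operators is closed under negation and commutation is preserved.
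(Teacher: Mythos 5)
Your proof is correct and follows essentially the same route as the paper: parts (1) and (2) come from Lemmas \ref{2.7} and \ref{2.10} (with the two inclusions obtained by the symmetry $T \leftrightarrow T+F$ via the perturbation $-F$, which the paper leaves implicit), and parts (3) and (4) follow from the identities $\sigma_{SBF} = \sigma_{USBF} \cap \sigma_{LSBF}$ and $\sigma_{BF} = \sigma_{USBF} \cup \sigma_{LSBF}$, exactly as in the paper.
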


\begin{proof} From Lemma \ref{2.7}, the first equation follows
easily. The second equation follows immediately from Lemma
\ref{2.10}. The third equation is true because
$\sigma_{SBF}(T)=\sigma_{USBF}(T) \cap \sigma_{LSBF}(T)$, for every
$T \in \mathcal{B}(X)$.
 The fourth equation is
also true because $\sigma_{BF}(T)=\sigma_{USBF}(T) \cup
\sigma_{LSBF}(T)$, for every $T \in \mathcal{B}(X)$.
\end{proof}

To continue the discussion of this paper, we recall some classical
definitions. Using the isomorphism $X/N(T^{d}) \approx R(T^{d})$ and
    following [\cite{Grabiner 9}], a topology on $R(T^{d})$ is
    defined as follows.

\begin{definition} ${\label{2.12}}$ Let $T \in \mathcal{B}(X)$. For every $d \in \mathbf{\mathbb{N}},$
the operator range topological on $R(T^{d})$ is defined by the norm
$||\small{\cdot}||_{R(T^{d})}$ such that for all $y \in R(T^{d})$,
$$||y||_{R(T^{d})} = \inf\{||x||: x \in X,y=T^{d}x\}.$$
\end{definition}
 For a detailed discussion of operator ranges and their topologies,
 we refer the reader to [\cite{Fillmore-Williams 7}] and [\cite{Grabiner 8}].
If $T\in\mathcal{B}(X)$, for each $n \in \mathbb{N}$, $T$ induces a
linear transformation from the vector space $\mathcal {R}(T^{n})/
\mathcal {R}(T^{n+1})$ to the space $\mathcal {R}(T^{n+1})/\mathcal
{R}(T^{n+2})$. We will let $k_{n}(T)$ be the dimension of the null
space of the induced map. From [\cite{Grabiner 9}, Lemma 2.3] it
follows that, for every $n \in \mathbb{N}$,
\begin{align*} \qquad  \qquad \qquad  k_{n}(T)
 &= \dim (\mathcal {N}(T) \cap \mathcal {R}(T^{n})) / (\mathcal {N}(T) \cap \mathcal {R}(T^{n+1})) \\
 &= \dim (\mathcal {R}(T) +
\mathcal {N}(T^{n+1})) / (\mathcal {R}(T) + \mathcal {N}(T^{n})).
 \end{align*}

\begin {definition}${\label{2.13}}$ Let $T \in
    \mathcal{B}(X)$ and let $d \in \mathbf{\mathbb{N}}$. Then $T$ has
    $uniform$ $descent$ for $n \geq d$ if $k_{n}(T)=0$ for all $n \geq d$. If in addition $R(T^{n})$
    is closed in the operator range topology of $R(T^{d})$ for all $n \geq d$$,$ then we say that $T$
    has $eventual$ $topological$ $uniform$
    $descent$$,$ and$,$ more precisely$,$ that $T$ has $topological$ $uniform$
    $descent$ $for$ $n \geq d$.

\end{definition}

Operators with eventual topological uniform descent are introduced
by Grabiner in [\cite{Grabiner 9}]. It includes all classes of
operators introduced in the Introduction of this paper. It also
includes many other classes of operators such as operators of Kato
type, quasi-Fredholm operators, operators with finite descent and
operators with finite essential descent, and so on. A very detailed
and far-reaching account of these notations can be seen in
[\cite{Aiena}, \cite{Berkani}, \cite{Mbekhta-Muller 9}]. Especially,
operators which have topological uniform descent for $n \geq 0$ are
precisely the $semi$-$regular$ operators studied by Mbekhta in
[\cite{Mbekhta}]. Discussions of operators with eventual topological
uniform descent may be found in [\cite{Berkani-Castro-Djordjevic},
\cite{Cao}, \cite{Grabiner 9}, \cite{Jiang-Zhong-Zeng},
\cite{Jiang-Zhong-Zhang}, \cite{Zeng-Zhong-Wu}].

An operator $T \in \mathcal{B}(X)$ is said to be $essentially$
$semi$-$regular$ if $\mathcal {R}(T)$ is closed and
$k(T):=\sum_{n=0}^{\infty} k_{n}(T) <\infty$. From [\cite{Grabiner
9}, Theorem 3.7] it follows that $$k(T) = \dim \mathcal
{N}(T)/(\mathcal {N}(T) \cap \mathcal {R}(T^{\infty})) = \dim
(\mathcal {R}(T) + \mathcal {N}(T^{\infty}))/\mathcal {R}(T).$$
Hence, every essentially semi-regular operator $T \in
\mathcal{B}(X)$ can be characterized by $\mathcal {R}(T)$ is closed
and and there exists a finite dimensional subspace
 $F \subseteq X$ such that $\mathcal {N}(T) \subseteq \mathcal
{R}(T^{\infty}) + F.$ In addition, if $T$ is essentially
semi-regular, then $T^{n}$ is essentially semi-regular,
 and hence $R(T^{n})$ is closed for all $n \in \mathbf{\mathbb{N}}$ (see Theorem 1.51 of [\cite{Aiena}]).
Hence it is easy to verify that
  if $T \in \mathcal{B}(X)$ is essentially semi-regular, then there exist $p \in \mathbb{N}$ such
that $T$ has topological uniform descent for
 $n \geq p$.

 Also, an operator $T \in \mathcal{B}(X)$ is called Riesz
if its essential spectrum $\sigma_e(T):= \{ \lambda \in \mathbb{C}:
T - \lambda I \makebox{ is not Fredholm} \}=\{0\}$. The $hyperrange$
and $hyperkernel$ of $T \in \mathcal{B}(X)$ are the subspaces of $X$
defined by $\mathcal {R}(T^{\infty}) = \bigcap_{n=1}^{\infty}
\mathcal {R}(T^{n})$ and $\mathcal {N}(T^{\infty}) =
\bigcup_{n=1}^{\infty} \mathcal {N}(T^{n})$, respectively.

\begin{lemma} ${\label{2.14}}$
Suppose that $T \in \mathcal{B}(X)$ has topological uniform descent
for $m \geq d$. If\, $S \in
  \mathcal{B}(X)$ is a Riesz operator commuting with $T$
   and $V=S+T$ has topological uniform descent
for $n \geq l,$ then:

 $(a)\ \mathrm{dim\,} (\mathcal {R}(T^{\infty})+ \mathcal {R}(V^{\infty})) / (\mathcal {R}(T^{\infty}) \cap \mathcal {R}(V^{\infty})) <
\infty;$

 $(b)\ \mathrm{dim\,} (\overline{\mathcal {N}(T^{\infty})} +
\overline{\mathcal {N}(V^{\infty})}) / ( \overline{\mathcal
{N}(T^{\infty})} \cap \overline{\mathcal {N}(V^{\infty})}) <
\infty;$

  $(c)$\,\,$\mathrm{dim\,} \mathcal {R}(V^{n})/\mathcal {R}(V^{n+1})$ $=$
$\mathrm{dim\,} \mathcal {R}(T^{m})/\mathcal {R}(T^{m+1})$ for
sufficiently large $m$ and $n;$

  $(d)$\,\,$\mathrm{dim\,} \mathcal {N}(V^{n+1})/\mathcal {N}(V^{n})$ $=$
$\mathrm{dim\,} \mathcal {N}(T^{m+1})/\mathcal {N}(T^{m})$ for
sufficiently large $m$ and $n$.
\end{lemma}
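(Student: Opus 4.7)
The four assertions all express that the hyperrange and hyperkernel, and the eventual values of the gap and nullity sequences, are stable under a commuting Riesz perturbation when both $T$ and $V = T+S$ have eventual topological uniform descent. The plan is to build the proof on three pillars: first, since $S$ commutes with $T$ it also commutes with $V$, and $S$ then leaves every $\mathcal{R}(T^{n})$, $\mathcal{N}(T^{n})$, $\mathcal{R}(V^{n})$, $\mathcal{N}(V^{n})$ invariant; second, topological uniform descent forces the decreasing sequences $\{c_{n}(T)\}$ and $\{c'_{n}(T)\}$ to be eventually constant because the hypothesis $k_{n}(T)=0$ combined with rank-nullity for the induced map on $\mathcal{R}(T^{n})/\mathcal{R}(T^{n+1})$ gives $c_{n}(T)\le c_{n+1}(T)$, and after passing to $\mathcal{R}(T^{d})$ the restriction $T_{d}$ is semi-regular so that $\mathcal{R}(T^{\infty})$ is closed in the operator-range topology of $\mathcal{R}(T^{d})$; third, a Riesz operator commuting with an operator of semi-Fredholm type preserves the relevant Fredholm invariants.

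\textbf{Parts (c) and (d).} The first step is to observe that for $m\ge d$ the sequence $c_{m}(T)$ is eventually constant at $c_{d}(T)$, and similarly $c'_{m}(T)$ stabilizes at $c'_{d}(T)$; the same reasoning applied to $V$ gives stable values $c_{l}(V)$ and $c'_{l}(V)$ for $n\ge l$. Thus (c) and (d) reduce to the equalities $c_{d}(T)=c_{l}(V)$ and $c'_{d}(T)=c'_{l}(V)$. I would establish these by restricting $T$, $V$ and $S$ to the common invariant picture provided by $\mathcal{R}(T^{d})$ and $\mathcal{R}(V^{l})$, where each operator becomes semi-regular and the Riesz operator $S$ restricts to a Riesz operator; standard stability results for commuting Riesz perturbations of semi-regular or semi-Fredholm operators then identify the asymptotic defect and nullity values of $T$ with those of $V$. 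Duality via $S^{*}$, which is likewise Riesz, converts the range-side statement to the kernel-side one, so (d) follows once (c) has been established.

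\textbf{Parts (a), (b), and the main obstacle.} For (a) the plan is to prove $\dim[\mathcal{R}(T^{\infty})/(\mathcal{R}(T^{\infty})\cap\mathcal{R}(V^{\infty}))]<\infty$ and the symmetric inequality, which gives the assertion through the isomorphism $(\mathcal{R}(T^{\infty})+\mathcal{R}(V^{\infty}))/\mathcal{R}(V^{\infty})\cong\mathcal{R}(T^{\infty})/(\mathcal{R}(T^{\infty})\cap\mathcal{R}(V^{\infty}))$. On $\mathcal{R}(T^{\infty})$ the restriction of $T$ is surjective (a consequence of the semi-regularity of $T_{d}$), and $S|_{\mathcal{R}(T^{\infty})}$ is Riesz, so $V|_{\mathcal{R}(T^{\infty})}$ behaves like an upper semi-Fredholm operator whose iterates recover $\mathcal{R}(V^{\infty})\cap\mathcal{R}(T^{\infty})$ up to a finite-dimensional piece; swapping the roles of $T$ and $V$ closes the argument. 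Part (b) is handled by the same strategy applied to the closed hyperkernels, either by dualizing to the adjoints (where $S^{*}$ is Riesz and the hyperkernel role is played by a hyperrange) or by working directly on the quotient $X/\overline{\mathcal{N}(T^{\infty})}$. The principal obstacle will be the topological subtleties: $\mathcal{R}(T^{\infty})$ and $\mathcal{N}(T^{\infty})$ are in general not closed in the norm of $X$, so one must consistently invoke the operator-range topology on $\mathcal{R}(T^{d})$ and pass to the closures $\overline{\mathcal{N}(T^{\infty})}$, $\overline{\mathcal{N}(V^{\infty})}$ (as the statement of (b) already reflects) to keep the Riesz-restriction and Fredholm-stability steps rigorous.
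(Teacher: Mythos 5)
Your outline correctly records the easy formal ingredients (the sequences $c_{n}$ and $c'_{n}$ stabilize once $k_{n}=0$, the isomorphism $(M+N)/N\approx M/(M\cap N)$, duality between the range and kernel statements), but it leaves unproved exactly the claims that constitute the lemma. The paper does not argue from scratch here: parts (a)--(d) are quoted from Theorems 3.8 and 3.12 and Remark 4.5 of the authors' earlier paper [\cite{Zeng-Zhong-Wu}] on small essential spectral radius perturbations of operators with topological uniform descent, and the only work done in the proof is a case analysis on whether $d=0$ (i.e. $T$ semi-regular) and, in that subcase, on whether $V$ is again semi-regular, so that the appropriate cited theorem applies. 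You replace those theorems by an appeal to ``standard stability results for commuting Riesz perturbations of semi-regular or semi-Fredholm operators.'' That is the gap: $T$ and $V$ need not be semi-Fredholm (the stabilized values $c_{d}(T)$, $c'_{d}(T)$ may be infinite), so the classical Rako\v{c}evi\'c/Tylli-type theorems do not apply directly; the proposed reduction of (c) to a ``common invariant picture provided by $\mathcal{R}(T^{d})$ and $\mathcal{R}(V^{l})$'' is close to circular, since how $\mathcal{R}(V^{n})$ sits relative to $\mathcal{R}(T^{d})$ is essentially what part (a) asserts; and the passage to $\mathcal{R}(T^{d})$ or $\mathcal{R}(T^{\infty})$ with its operator-range norm requires a separate argument that the restriction of $S$ is still Riesz there --- restricting a Riesz operator to a non-closed invariant subspace carrying a strictly finer complete norm does not automatically preserve the Riesz property.

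The concrete mechanism you offer for (a) also fails as stated. From ``$T$ is surjective on $\mathcal{R}(T^{\infty})$ and $S$ restricts to a Riesz operator'' one gets at best that $V|_{\mathcal{R}(T^{\infty})}$ is lower semi-Fredholm (not upper), and a lower semi-Fredholm operator can have a hyperrange of infinite codimension: for the unilateral right shift $R$ one has $\beta(R)=1$ while $\dim X/\mathcal{R}(R^{n})=n$, so $\mathcal{R}(R^{\infty})=\{0\}$. Hence ``the iterates recover $\mathcal{R}(V^{\infty})\cap\mathcal{R}(T^{\infty})$ up to a finite-dimensional piece'' does not follow from what precedes it. What rescues the lemma is the standing hypothesis that $V$ itself has eventual topological uniform descent, which your argument for (a) never invokes; making that hypothesis do its work is precisely the content of the cited Theorem 3.12 of [\cite{Zeng-Zhong-Wu}]. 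As it stands, the proposal is an outline of what must be proved rather than a proof.
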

\begin{proof} Parts (c) and (d) follow directly from
[\cite{Zeng-Zhong-Wu},  Theorems 3.8 and 3.12 and Remark 4.5].

When $d \neq 0$ (that is, $T$ is not semi-regular), parts (a) and
(b) follow also directly from [\cite{Zeng-Zhong-Wu},  Theorems 3.8
and 3.12 and Remark 4.5].

When $d = 0$ (that is, $T$ is semi-regular), then by
[\cite{Zeng-Zhong-Wu},  Theorems 3.8] we have that $V=T+S$ is
essentially semi-regular. So, there exist $p \in \mathbb{N}$ such
that $V$ has topological uniform descent for $n \geq p$. If $p \neq
0$ (that is, $V$ is not semi-regular), then parts (a) and (b) follow
directly from [\cite{Zeng-Zhong-Wu},  Theorem 3.12 and Remark 4.5].
If $p=0$ (that is, $V$ is semi-regular), noting that $(M+N)/N
\approx M/(M \cap N)$ for any subspaces $M$ and $N$ of $X$ (see
[\cite{Kaashoek M A 12}, Lemma 2.2]), then parts (a) and (b) follow
from [\cite{Zeng-Zhong-Wu}, Theorems 3.8 and Remark 4.5].
\end{proof}

\begin {theorem}${\label{2.15}}$ Let $F \in \mathcal {B}(X)$ with $F^{n} \in \mathcal {F}(X)$ for some $n \in \mathbb{N}$.

$(1)$\  If $T \in \mathcal {B}(X)$ is semi-B-Fredholm and commutes
with $F$, then

     $(a)\ \mathrm{dim\,} (\mathcal {R}(T^{\infty})+ \mathcal {R}((T+F)^{\infty})) / (\mathcal {R}(T^{\infty}) \cap \mathcal {R}((T+F)^{\infty})) <
     \infty;$

     $(b)\ \mathrm{dim\,} (\overline{\mathcal {N}(T^{\infty})} +
     \overline{\mathcal {N}((T+F)^{\infty})}) / ( \overline{\mathcal {N}(T^{\infty})} \cap
   \overline{\mathcal {N}((T+F)^{\infty})}) < \infty .$

$(2)$ If $T \in \mathcal {B}(X)$ is upper \begin {upshape}(\end
{upshape}resp. lower\begin {upshape})\end {upshape} semi-B-Fredholm
and commutes with $F$, then $T+F$ is also upper \begin
{upshape}(\end {upshape}resp. lower\begin {upshape})\end {upshape}
semi-B-Fredholm and
           \begin{upshape}ind\end{upshape}$(T+F)=$\begin{upshape}ind\end{upshape}$(T)$.
           \end{theorem}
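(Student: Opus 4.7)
The plan is to derive Theorem \ref{2.15} by combining Lemma \ref{2.14} with Lemmas \ref{2.7} and \ref{2.10} and Corollary \ref{2.11}; essentially no new machinery is required beyond observing that $F$ is Riesz and that both $T$ and $T+F$ fall under the hypothesis of Lemma \ref{2.14}. Since $F^{n} \in \mathcal{F}(X)$ is compact, the spectral mapping for the essential spectrum gives $\sigma_e(F)^n = \sigma_e(F^n) = \{0\}$, hence $\sigma_e(F)=\{0\}$ and $F$ is Riesz. Being semi-B-Fredholm, $T$ admits an integer $d$ with $\mathcal{R}(T^d)$ closed and $T_d$ semi-Fredholm, and it is a standard consequence of the Berkani--Grabiner theory (alluded to in the paragraph following Definition \ref{2.13}) that $T$ then has topological uniform descent for $n\ge d$. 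Corollary \ref{2.11}(3) says $T+F$ is also semi-B-Fredholm, so it likewise has topological uniform descent for $n\ge l$ for some $l$. All hypotheses of Lemma \ref{2.14} (with $S:=F$, $V:=T+F$) are met.

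Parts (1)(a) and (1)(b) are then verbatim parts (a) and (b) of Lemma \ref{2.14} applied to this $S$ and $V$. For part (2), the preservation of the upper (resp.\ lower) semi-B-Fredholm property under $+F$ is already Lemma \ref{2.7} (resp.\ Lemma \ref{2.10}). To identify the index, I use the elementary identities $\mathcal{N}(T_n)=\mathcal{N}(T)\cap\mathcal{R}(T^n)$ and $\mathcal{R}(T_n)=\mathcal{R}(T^{n+1})$, which in the notation of the Introduction read $\alpha(T_n)=c'_n(T)$ and $\beta(T_n)=c_n(T)$. Since $\mathrm{ind}(T)=\mathrm{ind}(T_n)$ for every sufficiently large $n$ (as recalled just before this section via \cite{Berkani semi-B-fredholm}, Proposition 2.1), one has
\[
\mathrm{ind}(T)=c'_n(T)-c_n(T)\qquad(n\gg 0),
\]
and, arguing identically for the semi-B-Fredholm operator $T+F$, $\mathrm{ind}(T+F)=c'_n(T+F)-c_n(T+F)$ for large $n$. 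Parts (c) and (d) of Lemma \ref{2.14} equate the stable values of $c_n$ and $c'_n$ for $T$ and $T+F$, so $\mathrm{ind}(T+F)=\mathrm{ind}(T)$.

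The only ingredient external to the lemmas assembled in the excerpt is the eventual topological uniform descent of a semi-B-Fredholm operator; this is the one place where I appeal to background from the Berkani--Grabiner framework and is, I expect, the sole potential obstacle. Once it is in hand, everything else reduces to bookkeeping: part (1) is the direct content of Lemma \ref{2.14}, and part (2) adds only the reidentification of $\alpha(T_n),\beta(T_n)$ with the stable quantities $c'_n,c_n$ that Lemma \ref{2.14} controls.
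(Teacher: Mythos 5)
Your proposal is correct and follows essentially the same route as the paper: establish that $F$ is Riesz via the spectral mapping theorem for the essential spectrum, invoke Lemmas \ref{2.7} and \ref{2.10} to see that $T+F$ is again semi-B-Fredholm, note that semi-B-Fredholm operators have eventual topological uniform descent, and then apply Lemma \ref{2.14}(a),(b) for part (1) and Lemma \ref{2.14}(c),(d) together with [\cite{Berkani semi-B-fredholm}, Proposition 2.1] for the index equality in part (2). Your explicit identification $\alpha(T_n)=c'_n(T)$, $\beta(T_n)=c_n(T)$ merely spells out a step the paper leaves implicit.
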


\begin{proof} Suppose that $F \in \mathcal {B}(X)$ with $F^{n} \in \mathcal {F}(X)$ for some $n \in
\mathbb{N}$. Then $F^{n}$ is Riesz, that is,
$\sigma_e(F^{n})=\{0\}.$ By the spectral mapping theorem for the
essential spectrum, we get that $\sigma_e(F)=\{0\},$ so $F$ is
Riesz.

$(1)$ Since $T$ is semi-B-Fredholm and commutes with $F$, by Lemmas
\ref{2.7} and \ref{2.10}, $T+F$ is also semi-B-Fredholm. Since every
semi-B-Fredholm operator is an operator of eventual topological
uniform descent, by Lemma \ref{2.14}$(a)$ and $(b)$, parts $(a)$ and
$(b)$ follow immediately.

$(2)$ By Lemmas \ref{2.7} and \ref{2.10}, it remains to prove that
\text{ind}$(T+F)=$\text{ind}$(T)$. Since every semi-B-Fredholm
operator is an operator of eventual topological uniform descent, by
Lemma \ref{2.14}$(c)$ and $(d)$ and [\cite{Berkani semi-B-fredholm},
Proposition 2.1], we have that \text{ind}$(T+F) =$\text{ind}$(T)$.
\end{proof}

\begin{theorem}${\label{2.16}}$ Let $T \in \mathcal{B}(X)$ and let $F \in \mathcal {B}(X)$ with $F^{n} \in \mathcal {F}(X)$ for some $n \in
\mathbb{N}$. If $T$ commutes with $F$, then

$(1)$ $\sigma_{USBW}(T+F) = \sigma_{USBW}(T);$

$(2)$ $\sigma_{LSBW}(T+F) = \sigma_{LSBW}(T);$

$(3)$ $\sigma_{SBW}(T+F) = \sigma_{SBW}(T);$

$(4)$ $\sigma_{BW}(T+F) = \sigma_{BW}(T).$

\end{theorem}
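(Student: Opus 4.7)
The proof should reduce to Theorem \ref{2.15}, which already gives the semi-B-Fredholm analogue of all four equalities together with invariance of the index. The plan is to apply Theorem \ref{2.15}(2) pointwise, replacing $T$ by $T-\lambda I$ for each $\lambda\in\mathbb{C}$, and to add the index condition that distinguishes semi-B-Weyl from semi-B-Fredholm.

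More precisely, for each $\lambda\in\mathbb{C}$ the operator $T-\lambda I$ commutes with $F$, so I would apply Theorem \ref{2.15}(2) to $T-\lambda I$ and conclude that $T-\lambda I$ is upper (resp.\ lower) semi-B-Fredholm if and only if $(T+F)-\lambda I=(T-\lambda I)+F$ is, with equal indices. Since upper semi-B-Weyl means upper semi-B-Fredholm with index $\leq 0$, this immediately yields $\lambda\notin\sigma_{USBW}(T)\iff\lambda\notin\sigma_{USBW}(T+F)$, proving (1). The identical argument with ``$\leq 0$'' replaced by ``$\geq 0$'' gives (2).

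For (3) and (4) I would not repeat the index argument but instead appeal to the set-theoretic identities $\sigma_{SBW}(T)=\sigma_{USBW}(T)\cap\sigma_{LSBW}(T)$ and $\sigma_{BW}(T)=\sigma_{USBW}(T)\cup\sigma_{LSBW}(T)$, valid for every $T\in\mathcal{B}(X)$ by definition of semi-B-Weyl and B-Weyl (the latter using the fact, noted in the excerpt just after Lemma \ref{2.10}, that $T$ is B-Fredholm iff it is both upper and lower semi-B-Fredholm, combined with ind $\leq 0$ and ind $\geq 0$ giving ind $=0$). These identities then transport (1) and (2) into (3) and (4) mechanically.

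There is essentially no obstacle, since Theorem \ref{2.15} does all the real work: the only point to double-check is that the argument is symmetric in $T$ and $T+F$, which it is because $(-F)^n=(-1)^nF^n\in\mathcal{F}(X)$ and $-F$ commutes with $T+F$, so Theorem \ref{2.15}(2) can be applied in either direction to establish both inclusions in each of the four equalities.
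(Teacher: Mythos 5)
Your proposal is correct and follows exactly the paper's route: the paper's entire proof of Theorem \ref{2.16} is the single line ``It follows directly from Theorem \ref{2.15}(2)'', and your write-up simply makes explicit the pointwise application at each $\lambda$, the index bookkeeping, the intersection/union identities for $\sigma_{SBW}$ and $\sigma_{BW}$, and the symmetry via $-F$. Nothing to add.
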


\begin{proof}It follows directly from Theorem \ref{2.15}(2).
\end{proof}

Next, we turn to the discussion of characterizations of power finite
rank operators via various spectra originated from seni-B-Fredholm
theory. Before this, some notations are needed.

For $T \in \mathcal{B}(X)$, let us define the $descent$ $spectrum$,
the $essential$ $descent$ $spectrum$ and the $eventual$
$topological$ $uniform$ $descent$ $spectrum$ of $T$ as follows
respectively: $$\sigma_{dsc}(T)=\{\lambda \in \mathbb{C}:
dsc(T-\lambda I) =\infty \};$$ $$\sigma^{e}_{dsc}(T)=\{\lambda \in
\mathbb{C}: dsc_{e}(T-\lambda I) =\infty \};$$
$$\sigma_{ud}(T)=\{\lambda \in \mathbb{C}: T-\lambda I \makebox{
does not have eventual topological uniform descent} \}.$$

 In [\cite{Jiang-Zhong-Zhang}], Jiang, Zhong
and Zhang obtained a classification of the components of $eventual$
$topological$ $uniform$ $descent$ $resolvent$ $set$ $\rho_{ud}(T):=
\mathbb{C} \backslash \sigma_{ud}(T)$.  As an application of the
classification, they show that $\sigma_{ud}(T)=\varnothing$
precisely when $T$ is algebraic.

\begin {lemma}${\label{2.17}}$ \begin {upshape}([\cite{Jiang-Zhong-Zhang}, Corollary 4.5])\end {upshape}
Let $T \in \mathcal{B}(X)$ and let $\sigma_{*} \in \{\sigma_{ud},
\sigma_{dsc}, \sigma^{e}_{dsc}, \sigma_{USBF}=\sigma^{e}_{LD}, $
$\sigma_{USBB}=\sigma_{LD}, \sigma_{BB}=\sigma_{D} \}$. Then the
following statements are equivalent:

 $(1)$ $\sigma_{*}(T) = \varnothing$;

 $(2)$ $T$ is algebraic \begin {upshape}(\end {upshape}that is, there exists a non-zero complex
polynomial $p$ for which $p(T)=0$\begin {upshape})\end {upshape}.
\end{lemma}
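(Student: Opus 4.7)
The plan is to split the equivalence into an easy direction (algebraicity implies every listed $\sigma_\ast$ is empty) and a single hard direction ($\sigma_{ud}(T)=\varnothing$ implies $T$ is algebraic); all remaining implications reduce to these once the containments $\sigma_{ud}(T)\subseteq \sigma_\ast(T)$ are established for each $\sigma_\ast$ in the list. These containments are classical: operators of finite descent or finite essential descent have uniform descent (and closed-range iterates) starting at $dsc(T)$ or $dsc_e(T)$ by Grabiner's work, while upper semi-B-Fredholm, upper semi-B-Browder and B-Browder operators are of Kato type and hence have eventual topological uniform descent. Consequently, the vanishing of any one of the listed spectra forces $\sigma_{ud}(T)=\varnothing$, and it suffices to prove the two extreme implications.

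For the easy direction, suppose $T$ is algebraic with minimal polynomial $p(z)=\prod_{i=1}^{k}(z-\lambda_i)^{n_i}$. The primary decomposition gives $X=\bigoplus_{i=1}^{k}\ker((T-\lambda_i I)^{n_i})$, on each summand $T-\lambda_i I$ is nilpotent of index at most $n_i$ while $T-\mu I$ is invertible for $\mu\neq\lambda_i$. Hence for every $\mu\in\mathbb{C}$ the operator $T-\mu I$ is a direct sum of a nilpotent block (possibly trivial) and an invertible block, so $asc(T-\mu I)=dsc(T-\mu I)<\infty$ and all powers $(T-\mu I)^n$ have closed range. Thus $T-\mu I$ is Drazin invertible, which immediately yields $\sigma_{BB}(T)=\sigma_D(T)=\varnothing$; and since Drazin invertibility subsumes each of the other listed good properties, every $\sigma_\ast(T)$ is empty.

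The main obstacle is the implication $\sigma_{ud}(T)=\varnothing\Rightarrow T$ algebraic. For this I would invoke the classification of the connected components of $\rho_{ud}(T):=\mathbb{C}\setminus\sigma_{ud}(T)$ developed in Jiang--Zhong--Zhang: on every connected component $\Omega$ of $\rho_{ud}(T)$, the intersection $\Omega\cap\sigma(T)$ is either all of $\Omega$ or consists of isolated poles of the resolvent of $T$, of uniformly bounded order. If $\sigma_{ud}(T)=\varnothing$, then $\rho_{ud}(T)=\mathbb{C}$ is itself a single component; since $\sigma(T)$ is a proper compact subset of $\mathbb{C}$, the first alternative is impossible, and therefore $\sigma(T)=\{\lambda_1,\dots,\lambda_k\}$ is a finite set of poles of orders $n_1,\dots,n_k$. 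The corresponding Riesz spectral projections yield a decomposition $X=\bigoplus_{i=1}^k X_i$ with $(T-\lambda_i I)^{n_i}\!\restriction_{X_i}=0$, so $p(T)=0$ for $p(z)=\prod_{i=1}^k(z-\lambda_i)^{n_i}$, proving that $T$ is algebraic and completing the equivalence.
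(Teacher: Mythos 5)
The paper does not actually prove Lemma 2.17: it is quoted verbatim from [\cite{Jiang-Zhong-Zhang}, Corollary 4.5], and the surrounding text only records that those authors deduce it from their classification of the components of $\rho_{ud}(T)$. Your argument is therefore not being measured against an in-paper proof, but it is a correct reconstruction of exactly the route the paper attributes to that reference: the reduction of all six spectra to $\sigma_{ud}$ via the inclusions $\sigma_{ud}(T)\subseteq\sigma_{*}(T)$ is the same device the authors themselves use in Corollary 2.18, the primary decomposition disposes of the easy direction, and the hard direction is precisely the application of the component classification of $\rho_{ud}(T)$ that the paper cites. Two small caveats. First, in the easy direction your claim that \emph{all} powers $(T-\mu I)^{n}$ have closed range is an overstatement: a nilpotent operator of index $k$ can have non-closed range for powers below $k$ (take $N(x,y)=(Ay,0)$ on $Y\oplus Y$ with $A$ injective with non-closed range). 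Only the ranges $\mathcal{R}((T-\mu I)^{n})$ for $n$ at least the nilpotency index of the relevant block are needed for Drazin, left/right Drazin and eventual topological uniform descent, and those are closed, so the conclusion stands. Second, the hard direction is only as self-contained as the classification you invoke; its real content is Grabiner's punctured-neighbourhood theorem for operators with eventual topological uniform descent together with the fact that an isolated point of $\sigma(T)$ at which $T-\lambda I$ has eventual topological uniform descent is a pole of the resolvent. Granting that input --- which is exactly the content of the cited corollary --- your proof is complete and matches the intended argument.
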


\begin {corollary}${\label{2.18}}$ Let $T \in \mathcal{B}(X)$ and let $\sigma_{*} \in \{
\sigma_{ud}, \sigma_{dsc}, \sigma^{e}_{dsc},
\sigma_{USBF}=\sigma^{e}_{LD}, \sigma_{LSBF}= \sigma^{e}_{RD},
\sigma_{SBF},$ $ \sigma_{BF},\sigma_{USBW}, \sigma_{LSBW},
\sigma_{SBW}, \sigma_{BW},\sigma_{USBB}=\sigma_{LD},
\sigma_{LSBB}=\sigma_{RD}, \sigma_{SBB}, \sigma_{BB}=\sigma_{D} \}$.
Then the following statements are equivalent:

 $(1)$ $\sigma_{*}(T) = \varnothing$;

 $(2)$ $T$ is algebraic.
\end{corollary}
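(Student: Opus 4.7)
The plan is to reduce every spectrum listed in Corollary \ref{2.18} to one of the six cases already delivered by Lemma \ref{2.17}, namely $\sigma_{ud},\sigma_{dsc},\sigma^{e}_{dsc},\sigma_{USBF},\sigma_{USBB},\sigma_{BB}$. Three structural ingredients will do the work: (i) the inclusion chains $\sigma_{USBF}(T)\subseteq\sigma_{USBW}(T)\subseteq\sigma_{USBB}(T)$ and the analogous chain at the lower-semi level; (ii) the identities $\sigma_{BF}(T)=\sigma_{USBF}(T)\cup\sigma_{LSBF}(T)$ and $\sigma_{SBF}(T)=\sigma_{USBF}(T)\cap\sigma_{LSBF}(T)$ together with their Weyl and Browder analogues, read straight off the definitions given in the Introduction; and (iii) the duality $\sigma_{LSBF}(T)=\sigma_{USBF}(T^{*})$, $\sigma_{LSBW}(T)=\sigma_{USBW}(T^{*})$, $\sigma_{LSBB}(T)=\sigma_{USBB}(T^{*})$, obtained from the interchange of left and right essential Drazin invertibility under passage to the adjoint (already invoked in the proof of Lemma \ref{2.10}), combined with the trivial fact that $p(T)=0\Leftrightarrow p(T^{*})=0$ so $T$ is algebraic iff $T^{*}$ is.

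With these tools the argument is a short case check. For $\sigma_{USBW}$, Lemma \ref{2.17} gives the desired equivalence for both ends of $\sigma_{USBF}\subseteq\sigma_{USBW}\subseteq\sigma_{USBB}$, so a sandwich yields it for the middle term. Duality then transports the Lemma \ref{2.17} cases and the $\sigma_{USBW}$ case just obtained to the three lower-semi spectra $\sigma_{LSBF},\sigma_{LSBW},\sigma_{LSBB}$. For the B-variants $\sigma_{BF}$ and $\sigma_{BW}$, one inserts them into the chain $\sigma_{USBF}\subseteq\sigma_{BF}\subseteq\sigma_{BW}\subseteq\sigma_{BB}$ and sandwiches between the two endpoints already furnished by Lemma \ref{2.17}.

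The one step that needs a different idea is the ``semi'' family $\sigma_{SBF},\sigma_{SBW},\sigma_{SBB}$, since these are \emph{intersections} of upper and lower parts, so emptiness of a semi-spectrum does not force either factor to be empty and the naive sandwich fails. Here I exploit the inclusions
\[
\sigma_{ud}(T)\subseteq\sigma_{SBF}(T)\subseteq\sigma_{SBW}(T)\subseteq\sigma_{SBB}(T)\subseteq\sigma_{USBB}(T),
\]
where the first inclusion is the remark in the paper that every semi-B-Fredholm operator has eventual topological uniform descent. Combined with the equivalence $\sigma_{ud}(T)=\varnothing\Leftrightarrow T$ algebraic from Lemma \ref{2.17}, any of the three semi-spectra being empty forces $\sigma_{ud}(T)=\varnothing$ and hence $T$ algebraic; the converse implication is immediate from the last inclusion and Lemma \ref{2.17}. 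This semi-case is the only real (and minor) obstacle, and the inclusion $\sigma_{ud}\subseteq\sigma_{SBF}$ is precisely what rescues it.
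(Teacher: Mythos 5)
Your proposal is correct and follows essentially the same strategy as the paper: every spectrum in the list is sandwiched in an inclusion chain whose two ends are spectra already covered by Lemma \ref{2.17} (with the key anchor $\sigma_{ud}(\cdot)\subseteq\sigma_{SBF}(\cdot)$ coming from the fact that semi-B-Fredholm operators have eventual topological uniform descent), and the equivalence is transported inward. The only deviation is your duality detour for $\sigma_{LSBF},\sigma_{LSBW},\sigma_{LSBB}$; the paper avoids this (and the index identity $\mathrm{ind}(T^{*})=-\mathrm{ind}(T)$ that your $\sigma_{LSBW}(T)=\sigma_{USBW}(T^{*})$ tacitly needs) by simply sandwiching these three in the chain $\sigma_{ud}\subseteq\sigma_{SBF}\subseteq\sigma_{LSBF}\subseteq\sigma_{LSBW}\subseteq\sigma_{LSBB}=\sigma_{RD}\subseteq\sigma_{BB}$, noting that $\sigma_{LSBB}=\sigma_{RD}$ is not needed as a separate input since $\sigma_{BB}=\sigma_{D}$ already closes the chain.
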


\begin{proof} If $\sigma_{*} \in \{
\sigma_{ud}, \sigma_{dsc}, \sigma^{e}_{dsc},
\sigma_{USBF}=\sigma^{e}_{LD}, $ $\sigma_{USBB}=\sigma_{LD},
\sigma_{BB}=\sigma_{D}\}$, the conclusion is given by Lemma
\ref{2.17}. Note that $$\sigma_{ud}(\cdot) \subseteq
\sigma_{SBF}(\cdot) \subseteq
\{_{\sigma_{USBF}(\cdot)=\sigma^{e}_{LD}(\cdot)}^{\sigma_{SBW}(\cdot)}
\subseteq \sigma_{USBW}(\cdot) \subseteq
\{_{\sigma_{USBB}(\cdot)=\sigma_{LD}(\cdot)}^{\sigma_{BW}(\cdot)}
\subseteq
     \sigma_{BB}(\cdot)=\sigma_{D}(\cdot)$$
and that $$\sigma_{ud}(\cdot) \subseteq \sigma_{SBF}(\cdot)
\subseteq
\{_{\sigma_{LSBF}(\cdot)=\sigma^{e}_{RD}(\cdot)}^{\sigma_{SBW}(\cdot)}
\subseteq \sigma_{LSBW}(\cdot) \subseteq
\{_{\sigma_{LSBB}(\cdot)=\sigma_{RD}(\cdot)}^{\sigma_{BW}(\cdot)}
\subseteq
     \sigma_{BB}(\cdot)=\sigma_{D}(\cdot).$$
By Lemma \ref{2.17}, if $\sigma_{*} \in \{ \sigma_{LSBF}=
\sigma^{e}_{RD}, \sigma_{SBF},$ $\sigma_{USBW}, \sigma_{LSBW},
\sigma_{SBW}, \sigma_{BW}, \sigma_{LSBB}=\sigma_{RD} \}$, the
conclusion follows easily. Note that $$\sigma_{ud}(\cdot) \subseteq
\sigma_{SBB}(\cdot) \subseteq
     \sigma_{BB}(\cdot)=\sigma_{D}(\cdot)$$
     and that $$\sigma_{ud}(\cdot) \subseteq
\sigma_{BF}(\cdot) \subseteq
     \sigma_{BB}(\cdot)=\sigma_{D}(\cdot).$$
Again by Lemma \ref{2.17}, if $\sigma_{*} \in \{ \sigma_{SBB},
\sigma_{BF}\}$, the conclusion follows easily.
\end{proof}

In [\cite{Bel-Burgos-Oudghiri 3}, Theorem 3.2], O. Bel Hadj Fredj et
al. proved that $F \in \mathcal {B}(X)$ with $F_{n} \in \mathcal
{F}(X)$ for some $n \in \mathbb{N}$ if and only if
$\sigma_{LD}^{e}(T+F) =\sigma_{LD}^{e}(T)$ (equivalently,
$\sigma_{LD}(T+F) =\sigma_{LD}(T)$) for every operator $T$ in the
commutant of $F$.

\smallskip

We are now in a position to give the proof of the following main
result.

\begin {theorem}${\label{2.19}}$ Let $F \in \mathcal {B}(X)$ and $\sigma_{*} \in \{
\sigma_{dsc}, \sigma^{e}_{dsc}, \sigma_{USBF}=\sigma^{e}_{LD},
\sigma_{LSBF}= \sigma^{e}_{RD}, \sigma_{SBF},$ $
\sigma_{BF},\sigma_{USBW}, \sigma_{LSBW}, \sigma_{SBW},
\sigma_{BW},\sigma_{USBB}=\sigma_{LD}, \sigma_{LSBB}=\sigma_{RD},
\sigma_{SBB}, \sigma_{BB}=\sigma_{D} \}$. Then the following
statements are equivalent:

$(1)$ $F^{n} \in \mathcal {F}(X)$ for some $n \in \mathbb{N}$;

$(2)$ $\sigma_{*}(T+F) =\sigma_{*}(T)$ for all $T \in \mathcal
{B}(X)$ commuting with $F$.
\end{theorem}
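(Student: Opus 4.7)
The implication $(1)\Rightarrow(2)$ is essentially an assembly of facts already established. For $\sigma_*\in\{\sigma_{USBF},\sigma_{LSBF},\sigma_{SBF},\sigma_{BF}\}$ it is Corollary~\ref{2.11}; for the four semi-B-Weyl spectra it is Theorem~\ref{2.16}; and for $\sigma_{dsc}$, $\sigma^{e}_{dsc}$ and $\sigma_{LD}=\sigma_{USBB}$ (equivalently $\sigma^{e}_{LD}=\sigma_{USBF}$) it is the cited work of Burgos--Kaidi--Mbekhta--Oudghiri, Bel Hadj Fredj, and Bel Hadj Fredj--Burgos--Oudghiri. The right-sided Browder/Drazin case $\sigma_{LSBB}=\sigma_{RD}$ is then obtained from the left-sided one via the duality argument used in Lemma~\ref{2.10} (because $F^{n}\in\mathcal{F}(X)$ implies $F^{*n}\in\mathcal{F}(X^{*})$), and the list is completed by $\sigma_{SBB}=\sigma_{USBB}\cap\sigma_{LSBB}$ and $\sigma_{BB}=\sigma_{USBB}\cup\sigma_{LSBB}$.

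For the reverse direction $(2)\Rightarrow(1)$ I would follow the template of \cite{Burgos-Kaidi-Mbekhta-Oudghiri}. Apply the hypothesis with $T=0$: since $0$ is algebraic (annihilated by $p(z)=z$), Corollary~\ref{2.18} gives $\sigma_{*}(0)=\varnothing$, so $\sigma_{*}(F)=\sigma_{*}(0+F)=\varnothing$, and a second application of the same corollary forces $F$ itself to be algebraic. Writing its minimal polynomial as $z^{n_{0}}\prod_{i=1}^{k}(z-\lambda_{i})^{m_{i}}$ with distinct nonzero $\lambda_{i}$, the Riesz functional calculus produces an $F$-invariant decomposition $X=X_{0}\oplus X_{1}\oplus\cdots\oplus X_{k}$ in which $F|_{X_{0}}$ is nilpotent and, for $i\ge 1$, $F|_{X_{i}}=\lambda_{i}I+N_{i}$ with $N_{i}$ nilpotent (so $F|_{X_{i}}$ is invertible). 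It then suffices to show that each $X_{i}$ with $i\ge 1$ is finite-dimensional, for in that case $F^{n_{0}}$ vanishes on $X_{0}$ and has range in the finite-dimensional subspace $X_{1}\oplus\cdots\oplus X_{k}$, whence $F^{n_{0}}\in\mathcal{F}(X)$.

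Suppose for contradiction that some $X_{j}$ with $j\ge 1$ is infinite-dimensional. I would then build $T\in\mathcal{B}(X)$ commuting with $F$ by selecting $S\in\mathcal{B}(X_{j})$ in the commutant of $N_{j}$ and extending by $0$ on the other summands. Using the direct-sum additivity $\sigma_{*}(A\oplus B)=\sigma_{*}(A)\cup\sigma_{*}(B)$ together with the invertibility of $F|_{X_{i}}$ for $i\ge 1$, the hypothesis $\sigma_{*}(T+F)=\sigma_{*}(T)$ reduces to a rigid relation between the spectral behaviour of $S$ at $0$ and of $S+F|_{X_{j}}$ at $0$, which I would violate by a judicious choice of $S$.

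The main obstacle is producing a single $S$ whose spectral defect at $0$ is simultaneously detected by every one of the fourteen spectra $\sigma_{*}$ listed. My plan is to exploit the inclusions highlighted in Corollary~\ref{2.18},
\[
\sigma_{ud}(\cdot)\ \subseteq\ \sigma_{SBF}(\cdot)\ \subseteq\ \sigma_{*}(\cdot)\ \subseteq\ \sigma_{BB}(\cdot)=\sigma_{D}(\cdot),
\]
so that it suffices to produce $S$ with $0\in\sigma_{ud}(S)$ (a rather weak demand) while $S+F|_{X_{j}}$ is invertible on $X_{j}$ (forcing $0\notin\sigma_{D}$ of the sum). A natural candidate, paralleling \cite{Burgos-Kaidi-Mbekhta-Oudghiri} and \cite{Bel-Burgos-Oudghiri 3}, is an injective operator on $X_{j}$ with closed but proper range, built from a basis (or Schauder decomposition) of $X_{j}$ respecting the nilpotent filtration $\ker N_{j}\subset\ker N_{j}^{2}\subset\cdots$, and scaled so that $\|S\|<|\lambda_{j}|$ to guarantee invertibility of $S+F|_{X_{j}}$.
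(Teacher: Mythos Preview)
Your treatment of $(1)\Rightarrow(2)$ matches the paper's and is fine; the setup for $(2)\Rightarrow(1)$ (taking $T=0$, forcing $F$ to be algebraic via Corollary~\ref{2.18}, decomposing $X$, and reducing to the claim that each summand $X_j$ with $\lambda_j\neq 0$ is finite-dimensional) is also correct and is exactly what the paper does. The gap lies in how you manufacture the contradiction on an infinite-dimensional $X_j$. Your proposed candidate $S$ --- injective with closed proper range --- is bounded below and hence semi-regular, so $0\notin\sigma_{ud}(S)$; more concretely, such an $S$ is already upper semi-Fredholm, so $0\notin\sigma_{USBF}(S)$, and your plan fails outright for $\sigma_*\in\{\sigma_{USBF},\sigma_{SBF},\sigma_{USBW},\sigma_{SBW},\sigma_{USBB}\}$. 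The demand $0\in\sigma_{ud}(S)$ is not ``rather weak'': it asks that $S$ fail to have eventual topological uniform descent, and building such an operator inside the commutant of a prescribed nilpotent $N_j$ (while also keeping the norm small) is not a routine matter.

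The paper avoids this difficulty with a translation argument that needs no pointwise control whatsoever. By [\cite{Burgos-Kaidi-Mbekhta-Oudghiri}, Proposition~3.3] one can choose merely a \emph{non-algebraic} operator $S_i\in\mathcal{B}(X_i)$ commuting with $F_i$; by Corollary~\ref{2.18} this is exactly the statement $\sigma_*(S_i)\neq\varnothing$, and it holds uniformly for every $\sigma_*$ on the list. Extending $S_i$ by zero to $S$ on $X$ and applying the hypothesis $\sigma_*(S+F)=\sigma_*(S)$, together with the already-proven nilpotent stability from $(1)\Rightarrow(2)$ applied to the commuting nilpotent $F_i-\lambda_i$, one obtains $\sigma_*(S_i)=\sigma_*(S_i+F_i)=\sigma_*(S_i+\lambda_i)=\sigma_*(S_i)+\lambda_i$. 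A nonempty bounded subset of $\mathbb{C}$ invariant under translation by $\lambda_i\neq 0$ cannot exist, which is the contradiction. The essential point you are missing is that one does not need to know \emph{where} $\sigma_*(S_i)$ lives, only that it is nonempty --- and non-algebraicity delivers precisely that, simultaneously for all fourteen spectra.
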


\begin{proof} For $\sigma_{*} \in \{\sigma_{dsc}, \sigma^{e}_{dsc},
\sigma_{USBB}=\sigma_{LD}, \sigma_{USBF}=\sigma^{e}_{LD}\}$, the
conclusion can be found in [\cite{Burgos-Kaidi-Mbekhta-Oudghiri},
Theorem 3.1], [\cite{Bel}, Theorem 3.1] and
[\cite{Bel-Burgos-Oudghiri 3}, Theorem 3.2]. In the following, we
prove the conclusion for the others spectra.

$(1)\Rightarrow (2)$ For $\sigma_{*} \in \{\sigma_{LSBF}=
\sigma^{e}_{RD}, \sigma_{SBF},$ $ \sigma_{BF},\sigma_{USBW},
\sigma_{LSBW}, \sigma_{SBW}, \sigma_{BW}\}$, the conclusion follows
directly from Corollary \ref{2.11} and Theorem \ref{2.16}.

For $\sigma_{*} \in \{\sigma_{LSBB}=\sigma_{RD}\}$, suppose that $F
\in \mathcal {B}(X)$ with $F^{n} \in \mathcal {F}(X)$ for some $n
\in \mathbb{N}$ and that $T \in \mathcal {B}(X)$ commutes with $F$.
It is clear that $F^* \in \mathcal {B}(X^*)$ with $F^{*n} \in
\mathcal {F}(X^*)$ and that $T^* \in \mathcal {B}(X^*)$ commutes
with $F^*$. From the presentation before this theorem, we get that
$\sigma_{LD}(T^*+F^*) =\sigma_{LD}(T^*)$, hence dually,
$\sigma_{RD}(T+F) =\sigma_{RD}(T)$.

For $\sigma_{*} \in \{\sigma_{SBB}, \sigma_{BB}=\sigma_{D}\}$,
noting that $\sigma_{SBB}(\cdot) = \sigma_{USBB}(\cdot) \cap
\sigma_{LSBB}(\cdot)$ and that $\sigma_{BB}(\cdot) =
\sigma_{USBB}(\cdot) \cup \sigma_{LSBB}(\cdot)$, the conclusion
follows.

$(2) \Rightarrow (1)$ Conversely, suppose that $\sigma_{*}(T+F)
=\sigma_{*}(T)$ for all $T \in \mathcal {B}(X)$ commuting with $F$,
where $\sigma_{*} \in \{\sigma_{LSBF}= \sigma^{e}_{RD},
\sigma_{SBF},$ $ \sigma_{BF},\sigma_{USBW}, \sigma_{LSBW},
\sigma_{SBW}, \sigma_{BW}, \sigma_{LSBB}=\sigma_{RD}, \sigma_{SBB},
\sigma_{BB}=\sigma_{D} \}$. By considering $T=0$, then
$\sigma_{*}(F) =\sigma_{*}(0+F)=\sigma_{*}(0)=\varnothing.$ By
Corollary \ref{2.18}, we know that $F$ is algebraic. Therefore
  $$X =X_{1} \oplus X_{2}\oplus \cdots \oplus X_{n},$$
 where $\sigma(F)=\{\lambda_{1},\lambda_{2},\cdots ,\lambda_{1} \}$
and the restriction of $F-\lambda_{i}$ to $X_{i}$ is nilpotent for
every $1 \leq i \leq n$. We claim that if $\lambda_{i} \neq 0$,
$\dim X_{i}$ is finite. Suppose to the contrary that $\lambda_{i}
\neq 0$ and $ X_{i}$ is infinite dimensional. For every $1 \leq j
\leq n$, let $F_{j}$ be the restriction of $F$ to $X_{j}$. Then with
respect to the decomposition $X =X_{1} \oplus X_{2}\oplus \cdots
\oplus X_{n}$,
$$F =F_{1} \oplus F_{2}\oplus \cdots \oplus F_{n}.$$ By
[\cite{Burgos-Kaidi-Mbekhta-Oudghiri}, Proposition 3.3], there
exists a non-algebraic operator $S_{i}$ on $X_{i}$ commuting with
the restriction $F_{i}$ of $F$. Let $S$ denote the extension of
$S_{i}$ to $X$ given by $S=0$ on each $X_{j}$ such that $j \neq i$.
Obviously $SF = FS$, and so $\sigma_{*}(S+F) =\sigma_{*}(S)$ by
hypothesis. On the other hand, since $F =F_{1} \oplus F_{2}\oplus
\cdots \oplus F_{n}$ is algebraic, $F_{j}$ is algebraic for every $1
\leq j \leq n$. In particular, $F_{j}$ is algebraic for every $j
\neq i$. Hence by Corollary \ref{2.18},
$\sigma_{BB}(F_{j})=\varnothing$ for every $j \neq i$, it follows
easily that $\sigma_{*}(S+F)= \sigma_{*}(S_{i}+F_{i})$. Since
$\sigma_{*}(S)= \sigma_{*}(S_{i})$, we obtain that
$\sigma_{*}(S_{i}) = \sigma_{*}(S_{i} + F_{i}) = \sigma_{*}(S_{i} +
\lambda_{i})$ because $F_{i} - \lambda_{i}$ is nilpotent. Choose an
arbitrary complex number $\alpha \in \sigma_{*}(S) \neq
\varnothing$, it follows that $k\lambda_{i}+\alpha \in
\sigma_{*}(S)$ for every positive integer $k$ , which implies that
$\lambda_{i} = 0$, the desired contradiction.
\end{proof}

\begin {remark}${\label{2.20}}$ \begin{upshape} $(1)$ The argument we have given for the the
implication $(2) \Rightarrow (1)$ in Theorem \ref{2.19} is, in fact,
discovered by following the trail marked out by Burgos, Kaidi,
Mbekhta and Oudghiri [\cite{Burgos-Kaidi-Mbekhta-Oudghiri}].

$(2)$ By [\cite{Berkani-Amouch}, Lemma 2.3], [\cite{Mbekhta-Muller
9}, pp. 135-136] and a similar argument of [\cite{Berkani pams},
Proposition 3.3], we know that $\sigma_{*}(T+F)=\sigma_{*}(T)$ for
all finite rank operator $F$ not necessarily commuting with $T$,
where $\sigma_{*} \in \{\sigma^{e}_{dsc},
\sigma_{USBF}=\sigma^{e}_{LD}, \sigma_{LSBF}= \sigma^{e}_{RD},
\sigma_{SBF},$ $ \sigma_{BF},\sigma_{USBW}, \sigma_{LSBW},
\sigma_{SBW}, \sigma_{BW}\}$. By [\cite{Mbekhta-Muller 9},
Observation 5 in p. 136], we know that $\sigma_{*}$ is not stable
under non-commuting finite rank perturbation, where $\sigma_{*} \in
\{\sigma_{dsc}, \sigma_{USBB}=\sigma_{LD},
\sigma_{LSBB}=\sigma_{RD}, \sigma_{SBB}, \sigma_{BB}=\sigma_{D}\}$.

\end{upshape}
\end{remark}

\section{Some applications}

 \quad\,~Rashid claimed in [\cite{Rashid}, Theorem 3.15] that if $T \in
\mathcal{B}(X)$ and $Q$ is a quasi-nilpotent operator that commute
with $T$, then (in [\cite{Rashid}], $\sigma_{USBW}$ is denoted as
$\sigma_{SBF^{-}_{+}}$)
$$\sigma_{USBW}(T+Q)=\sigma_{USBW}(T).$$
In [\cite{Zeng-Zhong}, Example 2.13], the authors showed that this
equality does not hold in general.

As an immediate consequence of Theorem \ref{2.19} (that is, main
result), we obtain the following corollary which, in particular, is
a corrected version of [\cite{Rashid}, Theorem 3.15] and also
provide positive answers to Questions \ref{1.1} and \ref{1.2}.

\begin {corollary}${\label{3.2}}$ Let $T \in \mathcal{B}(X)$ and let $N \in \mathcal{B}(X)$ be a
 nilpotent operator commuting with $T$. Then

 $(1)$ $\sigma_{USBW}(T+N) = \sigma_{USBW}(T);$

 $(2)$ $\sigma_{LSBW}(T+N) = \sigma_{LSBW}(T);$

 $(3)$ $\sigma_{SBW}(T+N) = \sigma_{SBW}(T);$

 $(4)$ $\sigma_{BW}(T+N) = \sigma_{BW}(T).$

\end{corollary}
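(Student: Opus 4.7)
The plan is extremely short: this is an immediate specialization of the main result, Theorem \ref{2.19}. The key observation is that a nilpotent operator $N$ trivially satisfies the hypothesis ``$F^{n} \in \mathcal{F}(X)$ for some $n \in \mathbb{N}$''. Indeed, by definition of nilpotency there exists $n \in \mathbb{N}$ with $N^{n} = 0$, and the zero operator is a finite rank operator, so $N^{n} \in \mathcal{F}(X)$.

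Having verified the hypothesis, I would then simply apply Theorem \ref{2.19} four times with $F = N$ and $\sigma_{\ast}$ chosen in turn from the list $\{\sigma_{USBW}, \sigma_{LSBW}, \sigma_{SBW}, \sigma_{BW}\}$, each of which lies in the set of spectra to which Theorem \ref{2.19} applies. The hypothesis that $N$ commutes with $T$ is precisely the commutation assumption required in (2) of that theorem, so the four equalities follow at once.

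There is no real obstacle here; the entire content of the corollary is already packaged in Theorem \ref{2.19}, and the only thing to say is that nilpotency is a (very) special case of the condition $F^{n} \in \mathcal{F}(X)$. If one wished to avoid citing the full strength of Theorem \ref{2.19}, a slightly more direct route would be to invoke Theorem \ref{2.16} (which already handles exactly these four B-Weyl spectra under the $F^{n} \in \mathcal{F}(X)$ hypothesis) instead; but going through Theorem \ref{2.19} is the more natural placement in the text, since the corollary is being advertised as an application of the main theorem.
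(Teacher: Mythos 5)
Your proposal is correct and matches the paper exactly: the paper states Corollary \ref{3.2} as an immediate consequence of Theorem \ref{2.19}, the only observation needed being that a nilpotent $N$ satisfies $N^{n}=0\in\mathcal{F}(X)$ and hence meets the power-finite-rank hypothesis. Your alternative remark that Theorem \ref{2.16} already suffices for these four spectra is also accurate.
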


Besides Question \ref{1.2}, Berkani and Zariouh also posed in
[\cite{Berkani Zariouh}] the following  question:

\begin {question}${\label{3.3}}$
 Let $T \in \mathcal{B}(X)$ and let $N \in \mathcal{B}(X)$ be a
 nilpotent operator commuting with $T$. Under which conditions
$$\sigma_{BF}(T+N) = \sigma_{BF}(T) \ ?$$
\end{question}

As an immediate consequence of Theorem \ref{2.19}, we also obtain
the following corollary which, in particular, provide a positive
answer to Question \ref{3.3}.

\begin {corollary}${\label{3.4}}$ Let $T \in \mathcal{B}(X)$ and let $N \in \mathcal{B}(X)$ be a
 nilpotent operator commuting with $T$. Then

 $(1)$ $\sigma_{USBF}(T+N) = \sigma_{USBF}(T);$

 $(2)$ $\sigma_{LSBF}(T+N) = \sigma_{LSBF}(T);$

 $(3)$ $\sigma_{SBF}(T+N) = \sigma_{SBF}(T);$

 $(4)$ $\sigma_{BF}(T+N) = \sigma_{BF}(T).$

\end{corollary}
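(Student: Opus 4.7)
The plan is to obtain all four equalities as a direct specialization of the main result, Theorem~\ref{2.19}. The only thing to check is that a nilpotent operator satisfies the hypothesis ``$F^{n}\in\mathcal{F}(X)$ for some $n\in\mathbb{N}$'' that appears in clause $(1)$ of that theorem. This is immediate: if $N\in\mathcal{B}(X)$ is nilpotent, then by definition there exists $n\in\mathbb{N}$ with $N^{n}=0$, and the zero operator belongs to $\mathcal{F}(X)$ trivially (it has rank $0<\infty$). Hence $N$ falls into the class of operators for which the conclusion of Theorem~\ref{2.19} holds.

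Having verified the hypothesis, I would then note that each of the four spectra appearing in the statement, namely $\sigma_{USBF}$, $\sigma_{LSBF}$, $\sigma_{SBF}$ and $\sigma_{BF}$, is explicitly listed among the admissible choices for $\sigma_{*}$ in Theorem~\ref{2.19}. Applying the implication $(1)\Rightarrow(2)$ of that theorem four times, once for each choice of $\sigma_{*}$, and using the assumption that $T$ commutes with $N=F$, yields all four equalities $(1)$--$(4)$ simultaneously.

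There is no real obstacle here; the work has all been done in Theorem~\ref{2.19}, and the corollary is essentially just the observation that nilpotent operators form a (proper) subclass of power finite rank operators. If one wished to avoid invoking the full strength of Theorem~\ref{2.19}, equalities $(1)$ and $(2)$ would follow directly from Lemmas~\ref{2.7} and \ref{2.10}, $(3)$ from the identity $\sigma_{SBF}=\sigma_{USBF}\cap\sigma_{LSBF}$, and $(4)$ from $\sigma_{BF}=\sigma_{USBF}\cup\sigma_{LSBF}$; this gives an alternative route via Corollary~\ref{2.11}, but citing Theorem~\ref{2.19} is cleaner and consistent with how the authors have announced the corollary as an ``immediate consequence''.
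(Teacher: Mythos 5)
Your proposal is correct and matches the paper's approach: the authors present Corollary~\ref{3.4} as an immediate consequence of Theorem~\ref{2.19}, exactly as you do, with the only point to verify being that a nilpotent operator is power finite rank (since $N^{n}=0\in\mathcal{F}(X)$). Your remark that the alternative route through Corollary~\ref{2.11} also suffices is accurate and consistent with the paper's structure.
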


We say that $\lambda \in \sigma_{a}(T)$ is a left pole of $T$ if
$T-\lambda I$ is left Drazin invertible. Let $\Pi_{a}(T)$ denote the
set of all left poles of $T$. An operator $T \in \mathcal{B}(X)$ is
called $a$-$polaroid$ if $\makebox{iso}\sigma_{a}(T)=\Pi_{a}(T)$.
Here and henceforth, for $A \subseteq \mathbb{C}$, $\makebox{iso}A$
is the set of isolated points of $A$. Besides Questions \ref{1.2}
and \ref{3.3}, Berkani and Zariouh also posed in [\cite{Berkani
Zariouh Functional Analysis}] the following three questions:

\begin {question}${\label{3.5}}$
 Let $T \in \mathcal{B}(X)$ and let $N \in \mathcal{B}(X)$ be a
 nilpotent operator commuting with $T$. Under which conditions
$$asc(T+N) < \infty \Longleftrightarrow \sigma_{asc}(T) < \infty \ ?$$
\end{question}

\begin {question}${\label{3.6}}$
 Let $T \in \mathcal{B}(X)$ and let $N \in \mathcal{B}(X)$ be a
 nilpotent operator commuting with $T$. Under which conditions,
$\mathcal {R}((T+N)^{m})$ is closed for $m$ large enough if and only
if $\mathcal {R}(T^{m})$ is closed for $m$ large enough $?$
\end{question}

\begin {question}${\label{3.7}}$
 Let $T \in \mathcal{B}(X)$ and let $N \in \mathcal{B}(X)$ be a
 nilpotent operator commuting with $T$. Under which conditions
$$ \Pi_{a}(T+N) = \Pi_{a}(T) \ ?$$
\end{question}

We mention that Question \ref{3.5} is, in fact, an immediate
consequence of an earlier result of Kaashoek and Lay
[\cite{Kaashoek-Lay}, Theorem 2.2]. To Question \ref{3.6}, suppose
that $T \in \mathcal{B}(X)$ and that $N \in \mathcal{B}(X)$ is a
 nilpotent operator commuting with $T$. As a
direct consequence of Theorem \ref{2.19} (that is, main result), we
know that if there exists $n \in \mathbb{N}$ such that $c_{n}(T) <
\infty$ or $c^{'}_{n}(T) < \infty$, then $\mathcal {R}((T+N)^{m})$
is closed for $m$ large enough if and only if $\mathcal {R}(T^{m})$
is closed for $m$ large enough.

 To Question \ref{3.7}, we first recall a classical result.

 \begin {lemma}${\label{3.8}}$ \begin{upshape}([\cite{Mbekhta-Muller 9}])\end{upshape}
 If $T \in \mathcal{B}(X)$ and $Q \in \mathcal{B}(X)$ is a
 quasi-nilpotent operator commuting with $T$, then
\begin{equation}{\label{eq 3.1}}
 \qquad\qquad\qquad\quad \sigma(T+Q) = \sigma(T) \makebox{ and  }  \sigma_{a}(T+Q) = \sigma_{a}(T).
\end{equation}
\end{lemma}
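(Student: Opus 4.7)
The plan is to treat both equalities in one pass by reducing each to a single one-sided inclusion. Indeed, if one can show that for every $S \in \mathcal{B}(X)$ and every quasi-nilpotent $P$ commuting with $S$ one has $\sigma(S+P) \subseteq \sigma(S)$ (and analogously for $\sigma_a$), then applying the statement with $S := T+Q$ and $P := -Q$ -- noting that $-Q$ is again quasi-nilpotent and commutes with $T+Q$ -- yields the reverse inclusion. Thus it suffices to prove the following two implications for commuting $A, B \in \mathcal{B}(X)$ with $B$ quasi-nilpotent: (i) if $A$ is invertible, so is $A+B$; (ii) if $A$ is bounded below, so is $A+B$.

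For (i), I would factor $A + B = A(I + A^{-1}B)$, which is legitimate because $B$ commutes with $A$ and hence with $A^{-1}$. The operator $A^{-1}B$ is quasi-nilpotent: commutativity gives $(A^{-1}B)^{n} = A^{-n}B^{n}$, so $\|(A^{-1}B)^{n}\|^{1/n} \leq \|A^{-n}\|^{1/n}\,\|B^{n}\|^{1/n}$, and since $\|A^{-n}\|^{1/n} \to r(A^{-1}) < \infty$ while $\|B^{n}\|^{1/n} \to r(B) = 0$, this product tends to $0$. Hence $I + A^{-1}B$ is invertible via a Neumann series, and so is $A+B$.

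The main obstacle is (ii), because when $A$ is merely bounded below the inverse $A^{-1}$ is not available on all of $X$, so the factorization trick above is not directly usable. My plan is to argue by contradiction: assume $\|Ax\| \geq c\|x\|$ for some $c > 0$, while $\|x_n\| = 1$ and $y_n := (A+B)x_n \to 0$. A direct induction on $k$, using only $AB = BA$, establishes
\[
A^{k} x_n \;=\; (-B)^{k} x_n \;+\; \sum_{j=0}^{k-1} (-B)^{j}\, A^{\,k-1-j} y_n .
\]
For each fixed $k$ the second term is a finite sum of fixed bounded operators applied to $y_n$, so it tends to $0$ as $n \to \infty$. On the other hand, iterating the bounded-below inequality yields $\|A^{k} x_n\| \geq c^{k}$, whereas $\|(-B)^{k} x_n\| \leq \|B^{k}\|$. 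Letting first $n \to \infty$ and then $k \to \infty$ forces $c \leq \lim_{k} \|B^{k}\|^{1/k} = r(B) = 0$, contradicting $c > 0$. Hence $A + B$ is bounded below, which completes (ii) and hence the lemma.
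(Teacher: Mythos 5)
Your proof is correct. Note that the paper does not actually prove Lemma \ref{3.8}: it is quoted verbatim from Mbekhta and M\"uller, where it arises as an instance of their axiomatic theory of spectra defined by regularities (the set of invertible, respectively bounded-below, operators is a regularity stable under commuting quasi-nilpotent perturbations). Your argument is therefore a genuinely different, self-contained and elementary route. The symmetrization step (replacing $Q$ by $-Q$ to get the reverse inclusion) is sound, part (i) is the standard factorization $A+B=A(I+A^{-1}B)$ together with the submultiplicativity of the spectral radius on commuting elements, and part (ii) is where you add real content: the identity
\[
A^{k}x_n=(-B)^{k}x_n+\sum_{j=0}^{k-1}(-B)^{j}A^{k-1-j}y_n
\]
checks out by induction using only $AB=BA$, the lower bound $\|A^{k}x_n\|\geq c^{k}$ and the upper bound $\|(-B)^{k}x_n\|\leq\|B^{k}\|$ are both valid, and sending $n\to\infty$ then $k\to\infty$ yields $c\leq r(B)=0$, the desired contradiction. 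What your approach buys is independence from the regularity machinery; what the citation buys the authors is brevity and a uniform treatment of many spectra at once. One cosmetic remark: the bounded-below property of $A+B$ that you establish in (ii) already subsumes injectivity and closedness of the range, so no separate verification is needed there.
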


As an immediate consequence of Theorem \ref{2.19} and Lemma
\ref{3.8}, we also obtain the following corollary which provide a
positive answer to Question \ref{3.7}.

\begin {corollary}${\label{3.9}}$
 Let $T \in \mathcal{B}(X)$ and let $N \in \mathcal{B}(X)$ be a
 nilpotent operator commuting with $T$. Then
 \begin{equation} {\label{eq 3.2}}
 \qquad\qquad\qquad\qquad\qquad\quad \ \    \Pi_{a}(T+N) = \Pi_{a}(T).
\end{equation}
\end{corollary}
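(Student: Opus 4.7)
The plan is to express $\Pi_a(T)$ as a set-theoretic combination of two spectra that are already known to be invariant under the perturbation, and then invoke the relevant invariance results.

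First I would rewrite the definition of the left-pole set. By definition, $\lambda \in \Pi_a(T)$ iff $\lambda \in \sigma_a(T)$ and $T-\lambda I$ is left Drazin invertible, i.e.\ $\lambda \notin \sigma_{LD}(T)$. Hence
\[
\Pi_a(T) \;=\; \sigma_a(T) \,\setminus\, \sigma_{LD}(T),
\]
and likewise $\Pi_a(T+N) = \sigma_a(T+N) \setminus \sigma_{LD}(T+N)$. So it suffices to show that both $\sigma_a$ and $\sigma_{LD}$ are preserved when we replace $T$ by $T+N$.

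For the approximate point spectrum, the nilpotent operator $N$ is in particular quasi-nilpotent and commutes with $T$, so Lemma \ref{3.8} gives $\sigma_a(T+N) = \sigma_a(T)$. For the left Drazin spectrum, nilpotence of $N$ means $N^n = 0 \in \mathcal{F}(X)$ for some $n\in\mathbb{N}$, so $N$ is a power finite rank operator commuting with $T$; applying Theorem \ref{2.19} with $\sigma_{*} = \sigma_{USBB} = \sigma_{LD}$ yields $\sigma_{LD}(T+N) = \sigma_{LD}(T)$.

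Combining the two equalities in the expression $\sigma_a(T+N)\setminus\sigma_{LD}(T+N)$ gives \eqref{eq 3.2}. There is no real obstacle here: the whole content of the corollary is packaged in the two invariance statements (Lemma \ref{3.8} for $\sigma_a$ and Theorem \ref{2.19} for $\sigma_{LD}$), and the only thing to check is the very short algebraic identity $\Pi_a(\cdot) = \sigma_a(\cdot)\setminus\sigma_{LD}(\cdot)$ coming directly from the definition of a left pole.
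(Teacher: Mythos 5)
Your proof is correct and follows exactly the route the paper intends: the paper states Corollary \ref{3.9} as an immediate consequence of Theorem \ref{2.19} (applied with $\sigma_{*}=\sigma_{USBB}=\sigma_{LD}$, since a nilpotent is power finite rank) and Lemma \ref{3.8}, combined through the identity $\Pi_{a}(\cdot)=\sigma_{a}(\cdot)\setminus\sigma_{LD}(\cdot)$. Nothing is missing.
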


Let $\Pi(T)$ denote the set of all poles of $T$. It is proved in
[\cite{Berkani-Zariouh partial}, Lemma 2.2] that if $T \in
\mathcal{B}(X)$ and $Q \in \mathcal{B}(X)$ is a nilpotent operator
commuting with $T$, then
 \begin{equation}{\label{eq 3.3}}
 \qquad\qquad\qquad\qquad\qquad\qquad \ \     \Pi(T+N) = \Pi(T).
\end{equation}

Let $E(T)$ and $E_{a}(T)$ denote the set of all isolated eigenvalues
of $T$ and the set of all eigenvalues of $T$ that are isolated in
$\sigma_{a}(T)$, respectively. That is, $$E(T) = \{\lambda \in
\makebox{iso}\sigma(T): 0 < \alpha(T- \lambda I)\}$$ and
$$E_{a}(T) =
\{\lambda \in \makebox{iso}\sigma_{a}(T): 0 < \alpha(T- \lambda
I)\}.$$ An operator $T \in \mathcal{B}(X)$ is called $a$-$isoloid$
if $\makebox{iso}\sigma_{a}(T)=E_{a}(T)$.

We set
 $\Pi^{0}(T)=\{\lambda \in \Pi(T): \alpha(T-\lambda I)<\infty\}$,
  $\Pi_{a}^{0}(T) =\{\lambda \in \Pi_{a}(T): \alpha(T-\lambda I) < \infty\}$,
   $E^{0}(T)=\{\lambda \in E(T): \alpha(T-\lambda I) < \infty\}$
   and
   $E_{a}^{0}(T)=\{\lambda \in E_{a}(T): \alpha(T-\lambda I) < \infty\}.$

Suppose that $T \in \mathcal{B}(X)$ and that $N \in \mathcal{B}(X)$
is a nilpotent operator commuting with $T$. Then from the proof of
[\cite{Berkani Zariouh}, Theorem 3.5], it follows that
$$0 < \alpha(T+N) \Longleftrightarrow 0 < \alpha(T)$$ and $$\alpha(T+N) < \infty  \Longleftrightarrow \alpha(T)  < \infty.$$
Hence by Equation (\ref{eq 3.1}), we have the following equations:
 \begin{equation} {\label{eq 3.4}}
 \qquad\qquad\qquad\qquad\qquad\qquad \ \     E(T+N) = E(T);
\end{equation} \vspace{-3mm}
\begin{equation} {\label{eq 3.5}}
 \qquad\qquad\qquad\qquad\qquad\qquad \ \     E_{a}(T+N) = E_{a}(T);
\end{equation} \vspace{-3mm}
\begin{equation} {\label{eq 3.6}}
 \qquad\qquad\qquad\qquad\qquad\qquad \ \     E^{0}(T+N) = E^{0}(T);
\end{equation} \vspace{-3mm}
\begin{equation} {\label{eq 3.7}}
 \qquad\qquad\qquad\qquad\qquad\qquad \ \     E_{a}^{0}(T+N) = E_{a}^{0}(T).
\end{equation}

An operator $T \in \mathcal{B}(X)$ is said to be $upper$
$semi$-$Weyl$ if $T$ is upper semi-Fredholm and ind$(T) \leq 0$. An
operator $T \in \mathcal{B}(X)$ is said to be $Weyl$ if $T$ is
Fredholm and ind$(T)=0$. For $T \in \mathcal{B}(X)$, let us define
the $upper$ $semi$-$Browder$ $spectrum$, the $Browder$ $spectrum$,
the $upper$ $semi$-$Weyl$ $spectrum$ and the $Weyl$ $spectrum$ of
$T$ as follows respectively:
$$ \sigma_{USB}(T) = \{ \lambda \in \mathbb{C}: T - \lambda I \makebox{ is not a upper
semi-Browder operator} \};$$
$$ \sigma_{B}(T) = \{ \lambda \in \mathbb{C}: T - \lambda I \makebox{ is not a Browder
operator} \};$$
$$ \sigma_{USW}(T) = \{ \lambda \in \mathbb{C}: T - \lambda I \makebox{ is not a upper
semi-Weyl operator} \};$$
$$ \sigma_{W}(T) = \{ \lambda \in \mathbb{C}: T - \lambda I \makebox{ is not a Weyl
operator} \}.$$

Suppose that $T \in \mathcal{B}(X)$ and that $R \in \mathcal{B}(X)$
is a Riesz operator commuting with $T$. Then it follows from
[\cite{Tylli}, Proposition 5] and [\cite{Rakocevic}, Theorem 1] that
 \begin{equation} {\label{eq 3.8}}
 \qquad\qquad\qquad\qquad\qquad\quad \ \     \sigma_{USW}(T+R) = \sigma_{USW}(T);
\end{equation} \vspace{-3mm}
\begin{equation} {\label{eq 3.9}}
 \qquad\qquad\qquad\qquad\qquad\qquad \ \     \sigma_{W}(T+R) =
 \sigma_{W}(T);
\end{equation}\vspace{-3mm}
\begin{equation} {\label{eq 3.10}}
 \qquad\qquad\qquad\qquad\qquad\quad \ \     \sigma_{USB}(T+R) = \sigma_{USB}(T);
\end{equation}\vspace{-3mm}
\begin{equation} {\label{eq 3.11}}
 \qquad\qquad\qquad\qquad\qquad\qquad \ \     \sigma_{B}(T+R) =
 \sigma_{B}(T).
\end{equation}

Suppose that $T \in \mathcal{B}(X)$ and that $Q \in \mathcal{B}(X)$
is a quasi-nilpotent operator commuting with $T$. Then, noting that
$\Pi^{0}(T)=  \sigma(T) \backslash \sigma_{B}(T)$ and
$\Pi_{a}^{0}(T)=\sigma_{a}(T) \backslash \sigma_{USB}(T)$ for any $T
\in \mathcal{B}(X)$, it follows from Equations (\ref{eq 3.1}),
(\ref{eq 3.10}) and (\ref{eq 3.11}) that
\begin{equation} {\label{eq 3.12}}
 \qquad\qquad\qquad\qquad\qquad\qquad \ \     \Pi^{0}(T+Q) =
 \Pi^{0}(T);
\end{equation} \vspace{-3mm}
\begin{equation} {\label{eq 3.13}}
 \qquad\qquad\qquad\qquad\qquad\qquad \ \     \Pi_{a}^{0}(T+Q) =
 \Pi_{a}^{0}(T).
\end{equation}

In the following table, we use the abbreviations $gaW$, $aW$, $gW$,
$W$, $(gw)$, $(w)$, $(gaw)$ and $(aw)$ to signify that an operator
$T \in \mathcal{B}(X)$ obeys generalized a-Weyl's theorem, a-Weyl's
theorem, generalized Weyl's theorem, Weyl's theorem, property
$(gw)$, property $(w)$, property $(gaw)$ and property $(aw)$. For
example, an operator $T \in \mathcal{B}(X)$ is said to obey
generalized a-Weyl's theorem (in symbol $T \in gaW$), if
$\sigma_{a}(T) \backslash \sigma_{USBW}(T) = E_{a}(T)$. Similarly,
the abbreviations $gaB$, $aB$, $gB$, $B$, $(gb)$, $(b)$, $(gab)$ and
$(ab)$ have analogous meaning with respect to Browder's theorem or
the properties.

\smallskip

\begin {center}

\begin{tabular} {|l|l|l|l|l} \hline

 $gaW$    &   $\sigma_{a}(T) \backslash \sigma_{USBW}(T) = E_{a}(T)$      &   $gaB$    &    $\sigma_{a}(T) \backslash \sigma_{USBW}(T) = \Pi_{a}(T)$        \\ \hline
 $aW$     &   $\sigma_{a}(T) \backslash \sigma_{USW}(T) = E_{a}^{0}(T)$   &   $aB$     &    $\sigma_{a}(T) \backslash \sigma_{USW}(T) = \Pi_{a}^{0}(T)$     \\ \hline
 $gW$     &   $\sigma(T) \backslash \sigma_{BW}(T) = E(T)$                &   $gB$     &    $\sigma(T) \backslash \sigma_{BW}(T) = \Pi(T)$                  \\ \hline
 $W$      &   $\sigma(T) \backslash \sigma_{W}(T) = E^{0}(T)$             &    $B$     &    $\sigma(T) \backslash \sigma_{W}(T) = \Pi^{0}(T)$               \\ \hline
 $(gw)$   &   $\sigma_{a}(T) \backslash \sigma_{USBW}(T) = E(T)$          &   $(gb)$   &    $\sigma_{a}(T) \backslash \sigma_{USBW}(T) = \Pi(T)$           \\ \hline
 $(w)$    &   $\sigma_{a}(T) \backslash \sigma_{USW}(T) = E^{0}(T)$       &    $(b)$   &    $\sigma_{a}(T) \backslash \sigma_{USW}(T) = \Pi^{0}(T)$        \\ \hline
 $(gaw)$  &   $\sigma(T) \backslash \sigma_{BW}(T) = E_{a}(T)$            &   $(gab)$  &    $\sigma(T) \backslash \sigma_{BW}(T) = \Pi_{a}(T)$             \\ \hline
 $(aw)$   &   $\sigma(T) \backslash \sigma_{W}(T) = E_{a}^{0}(T)$         &    $(ab)$  &     $\sigma(T) \backslash \sigma_{W}(T) = \Pi_{a}^{0}(T)$          \\ \hline

\end{tabular}

\end {center}

Weyl-Browder type theorems and properties, in their classical and
more recently in their generalized form, have been studied by a
large of authors. Theorem \ref{2.19} and Equations (\ref{eq
3.1})---(\ref{eq 3.13}) give us an unifying framework for
establishing commuting perturbational results of Weyl-Browder type
theorems and properties (generalized or not).

\begin {corollary}${\label{3.10}}$
$(1)$  If $T \in \mathcal{B}(X)$ obeys $gaW$
       \begin {upshape}(\end {upshape}resp. $aW,$ $gW,$ $W,$ $(gw),$ $(w)$,
       $(gaw),$ $(aw),$ $(gb),$  $(gab)$\begin {upshape})\end {upshape} and $N \in
       \mathcal{B}(X)$ is a nilpotent operator commuting with $T$, then
       $T+N$ also obeys $gaW$
       \begin {upshape}(\end {upshape}resp. $aW,$ $gW,$ $W,$ $(gw),$ $(w),$
      $(gaw),$ $(aw),$ $(gb),$  $(gab)$\begin {upshape})\end {upshape}.

$(2)$  If $T \in \mathcal{B}(X)$ obeys $gaB$
      \begin {upshape}(\end {upshape}resp. $aB,$ $gB,$ $B$\begin {upshape})\end {upshape} and $R \in
       \mathcal{B}(X)$ is a Riesz operator commuting with $T$, then $T+R$ also obeys
      $gaB$ \begin {upshape}(\end {upshape}resp. $aB,$ $gB,$ $B$\begin {upshape})\end {upshape}.

$(3)$  If $T \in \mathcal{B}(X)$ obeys $(b)$
      \begin {upshape}(\end {upshape}resp. $(ab)$\begin {upshape})\end {upshape} and $Q \in
       \mathcal{B}(X)$ is a quasi-nilpotent operator commuting with $T$, then $T+Q$ also obeys
       $(b)$ \begin {upshape}(\end {upshape}resp. $(ab)$\begin {upshape})\end {upshape}.

\end{corollary}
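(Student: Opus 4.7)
The plan is to verify each of the three parts by unfolding the set-theoretic definitions gathered in the table and substituting the perturbation-invariance identities already collected in Lemma \ref{3.8}, Corollaries \ref{3.2}, \ref{3.4}, \ref{3.9} and equations \eqref{eq 3.1}--\eqref{eq 3.13}. Concretely, for each property $P$ on the list, I will check that every set appearing in the defining equation of $P$ is individually invariant under the perturbation in question; direct substitution then turns the defining equation for $T$ into the same equation for $T+\text{perturbation}$.

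For part (1), let $N$ be nilpotent and commute with $T$. Then every set that appears in the defining equations of the ten listed properties is invariant under $T\mapsto T+N$: the spectra $\sigma$ and $\sigma_a$ by \eqref{eq 3.1}; the semi-B-Weyl spectra $\sigma_{USBW}, \sigma_{BW}$ by Corollary \ref{3.2}; the classical Weyl spectra $\sigma_{USW}, \sigma_W$ by \eqref{eq 3.8}--\eqref{eq 3.9} (a nilpotent operator is Riesz); the eigenvalue sets $E, E_a, E^0, E_a^0$ by \eqref{eq 3.4}--\eqref{eq 3.7}; and the pole sets $\Pi$ and $\Pi_a$ by \eqref{eq 3.3} and Corollary \ref{3.9}. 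Direct substitution then yields the assertion for each of $gaW, aW, gW, W, (gw), (w), (gaw), (aw), (gb), (gab)$.

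For part (3), a quasi-nilpotent $Q$ commuting with $T$ is in particular Riesz, so $\sigma_{USW}$ and $\sigma_W$ are invariant by \eqref{eq 3.8}--\eqref{eq 3.9}; moreover $\sigma$ and $\sigma_a$ are invariant by Lemma \ref{3.8}, and $\Pi^0, \Pi_a^0$ by \eqref{eq 3.12}--\eqref{eq 3.13}. These suffice to preserve $(b)$, defined by $\sigma_a(T)\setminus\sigma_{USW}(T)=\Pi^0(T)$, and $(ab)$, defined by $\sigma(T)\setminus\sigma_W(T)=\Pi_a^0(T)$.

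Part (2), with $R$ Riesz and commuting with $T$, is the delicate case and is where I expect the main obstacle. Equations \eqref{eq 3.8}--\eqref{eq 3.11} give invariance of $\sigma_W, \sigma_{USW}, \sigma_B, \sigma_{USB}$, and Browder's theorem $B$ can be rephrased as the identity $\sigma_W(T)=\sigma_B(T)$ while a-Browder's theorem $aB$ is equivalent to $\sigma_{USW}(T)=\sigma_{USB}(T)$, so both are immediately preserved. The generalized versions $gB$ and $gaB$ do \emph{not} succumb to direct substitution, because $\sigma_{BW}, \sigma_{USBW}, \Pi, \Pi_a$ are not in general invariant under Riesz commuting perturbations. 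The way around this is to invoke the classical equivalences $gB \Longleftrightarrow B$ and $gaB \Longleftrightarrow aB$ (established in the Berkani--Amouch--Zguitti circle of papers), which reduce the generalized Browder properties to the classical ones just handled. Once this equivalence is in hand the preservation follows from \eqref{eq 3.9} and \eqref{eq 3.11}, respectively from \eqref{eq 3.8} and \eqref{eq 3.10}, and the whole argument becomes a case-by-case bookkeeping exercise matching each defining equation against the table of invariant sets.
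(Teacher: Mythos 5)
Your proposal is correct and follows essentially the same route as the paper: part (1) and part (3) by direct substitution of the invariance identities (Theorem \ref{2.19} together with Equations (\ref{eq 3.1})--(\ref{eq 3.13})) into the defining equations from the table, and part (2) by first reducing $gB$ and $gaB$ to $B$ and $aB$ via the Amouch--Zguitti equivalence and then rephrasing these as $\sigma_{W}=\sigma_{B}$ and $\sigma_{USW}=\sigma_{USB}$, which are preserved by (\ref{eq 3.8})--(\ref{eq 3.11}). Your explicit observation that $\sigma_{BW}$, $\sigma_{USBW}$, $\Pi$, $\Pi_{a}$ need not be stable under commuting Riesz perturbations, which is precisely why the detour through the classical Browder theorems is needed, matches the paper's (more tersely stated) reasoning.
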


\begin{proof}
$(1)$ It follows directly from Theorem \ref{2.19} and Equations
(\ref{eq 3.1})---(\ref{eq 3.9}).

$(2)$ By [\cite{AmouchM ZguittiH equivalence}], we know that $T$
obeys $gB$ (resp. $gaB$) if and only if $T$ obeys $B$ (resp. $aB$)
for any $T \in \mathcal{B}(X)$. Note that $T$ obeys $B$ (resp. $aB$)
if and only if $\sigma_{W}(T)=\sigma_{B}(T)$ (resp.
$\sigma_{USW}(T)=\sigma_{USB}(T)$). Hence by Equations (\ref{eq
3.8})---(\ref{eq 3.11}), the conclusion follows immediately.

$(3)$ It follows directly from Equations (\ref{eq 3.1}), (\ref{eq
3.8}), (\ref{eq 3.9}), (\ref{eq 3.12}) and (\ref{eq 3.13}).
\end{proof}

The commuting perturbational results established in Corollary
\ref{3.10}, in particular, improve many recent results of
[\cite{Berkani-Amouch}, \cite{Berkani-Zariouh partial},
\cite{Berkani Zariouh}, \cite{Berkani Zariouh Functional Analysis},
\cite{Rashid gw}] by removing certain extra assumptions.

\begin {remark}${\label{3.11}}$ \begin{upshape}
 (1)  For generalized a-Weyl's theorem, part (1) of Corollary
     \ref{3.10} improves
     [\cite{Berkani Zariouh Functional Analysis}, Theorem 3.3] by removing the extra assumption that
     $E_{a}(T) \subseteq \makebox{iso}\sigma(T)$ and extends [\cite{Berkani Zariouh Functional Analysis}, Theorem 3.2].
     For property $(gw)$, on one hand, part (1) of Corollary
     \ref{3.10} improves [\cite{Rashid gw}, Theorem
     2.16] (resp. [\cite{Berkani-Amouch}, Theorem 3.6]) by removing the extra assumption that
     $T$ is a-isoloid (resp. $T$ is a-polaroid) and extends [\cite{Berkani Zariouh}, Theorem 3.8];
     on the other hand, our proof for it is a corrected proof of [\cite{Rashid gw}, Theorem
     2.16]. For property $(gab)$, part (1) of Corollary
    \ref{3.10} improves [\cite{Berkani Zariouh}, Theorem 3.2] by removing the extra assumption that
     $T$ is a-polaroid and extends [\cite{Berkani Zariouh}, Theorem
     3.4].

     For generalized Weyl's theorem (resp. property $(w)$, property $(gaw)$), part (1) of Corollary
     \ref{3.10} has been proved in [\cite{Berkani-Amouch}, Theorem 3.4]
     (resp. [\cite{Aiena-Biondi-Villafane}, Theorem 3.8] and [\cite{Berkani-Amouch}, Theorem 3.1], [\cite{Berkani Zariouh}, Theorem 3.6])
     by using a different method.

     For a-Weyl's theorem,
     some other commuting perturbational theorems for it have been proved in [\cite{Berkani Zariouh Functional Analysis}, \cite{Cao-Guo-Meng}, \cite{Oudghiri}].

     For Weyl's theorem (resp. property $(aw)$, property $(gb)$), part (1) of Corollary
     \ref{3.10} has been proved in [\cite{Oberai}, Theorem 3] (resp. [\cite{Berkani Zariouh}, Theorem
    3.5], [\cite{Zeng-Zhong}, Theorem 2.6]).

 (2) It has been discovered in [\cite{Aiena-Carpintero-Rosas}] that  Browder's theorem and a-Browder's theorem are stable under
    commuting Riesz perturbations.

 (3) For property $(b)$ (resp. $(ab)$), part (3) of Corollary
    \ref{3.10} extends [\cite{Berkani-Zariouh partial}, Theorem 2.1]
    (resp. [\cite{Berkani Zariouh}, Theorem 3.1]) from commuting nilpotent perturbations to commuting quasi-nilpotent perturbations.

  \end{upshape}

\end{remark}

We conclude this paper by some examples to illustrate our commuting
perturbational results of Weyl-Browder type theorems and properties
(generalized or not).

\bigskip

The following simple example shows that $gaW$, $aW,$ $gW,$ $W,$
$(gw),$ $(w)$, $(gaw)$ and $(aw)$ are not stable under commuting
quasi-nilpotent perturbations.

\begin {example}${\label{3.12}}$ \begin{upshape}
Let $Q: l_{2}(\mathbb{N}) \longrightarrow l_{2}(\mathbb{N})$ be a
quasi-nilpotent operator defined by
$$Q(x_{1},x_{2},\cdots )=(\frac{x_{2}}{2},\frac{x_{3}}{3}, \cdots ) \makebox{\ \ \  for all \ } (x_{n}) \in
l_{2}(\mathbb{N}).$$ Then $Q$ is quasi-nilpotent,
$\sigma(Q)=\sigma_{a}(Q)=\sigma_{W}(Q)=\sigma_{USW}(Q)=\sigma_{BW}(Q)=\sigma_{USBW}(Q)$
$=\{0\}$ and $E_{a}(Q)=E_{a}^{0}(Q)=E(Q)=E^{0}(Q)=\{0\}$. Take
$T=0.$ Clearly, $T$ satisfies $gaW$ (resp. $aW,$ $gW,$ $W,$ $(gw),$
$(w)$, $(gaw)$, $(aw)$), but $T+Q=Q$ fails $gaW$ (resp. $aW,$ $gW,$
$W,$ $(gw),$ $(w)$, $(gaw)$, $(aw)$).

  \end{upshape}

\end{example}

The following example was given in [\cite{Zeng-Zhong}, Example 2.14]
to show that property $(gb)$ is not stable under commuting
quasi-nilpotent perturbations. Now, we use it to illustrate that
property $(gab)$ is also unstable under commuting quasi-nilpotent
perturbations.

\begin {example}${\label{3.13}}$ \begin{upshape}
Let $U: l_{2}(\mathbb{N}) \longrightarrow l_{2}(\mathbb{N})$ be the
unilateral right shift operator defined by
$$U(x_{1},x_{2},\cdots )=(0,x_{1},x_{2}, \cdots ) \makebox{\ \ \  for all \ } (x_{n}) \in
l_{2}(\mathbb{N}).$$ Let $V: l_{2}(\mathbb{N}) \longrightarrow
l_{2}(\mathbb{N})$ be a quasi-nilpotent operator defined by
$$V(x_{1},x_{2},\cdots )=(0,x_{1},0,\frac{x_{3}}{3},\frac{x_{4}}{4} \cdots ) \makebox{\ \ \  for all \ } (x_{n}) \in
l_{2}(\mathbb{N}).$$ Let $N: l_{2}(\mathbb{N}) \longrightarrow
l_{2}(\mathbb{N})$ be a quasi-nilpotent operator defined by
$$N(x_{1},x_{2},\cdots )=(0,0,0,-\frac{x_{3}}{3},-\frac{x_{4}}{4} \cdots ) \makebox{\ \ \  for all \ } (x_{n}) \in
l_{2}(\mathbb{N}).$$ It is easy to verify that $VN=NV$. We consider
the operators $T$ and $Q$ defined by $T=U \oplus V$ and $Q=0 \oplus
N$, respectively. Then $Q$ is quasi-nilpotent and $TQ=QT$. Moreover,
$$\sigma(T) = \sigma(U) \cup \sigma(V) = \{\lambda \in
\mathbb{C}: 0 \leq |\lambda| \leq 1 \},$$
$$\sigma_{a}(T) = \sigma_{a}(U) \cup \sigma_{a}(V) = \{\lambda \in
\mathbb{C}: |\lambda| = 1 \} \cup \{0\},$$
$$\sigma(T+Q) = \sigma(U) \cup
\sigma(V+N) = \{\lambda \in \mathbb{C}: 0 \leq |\lambda| \leq 1 \}$$
and
$$\sigma_{a}(T+Q) = \sigma_{a}(U) \cup \sigma_{a}(V+N) = \{\lambda \in
\mathbb{C}: |\lambda| = 1 \} \cup \{0\}.$$ It follows that
$\Pi_{a}(T)=\Pi(T)= \varnothing$ and $\{0\}= \Pi_{a}(T+Q) \neq
\Pi(T+Q)= \varnothing.$ Hence by [\cite{Berkani-Zariouh New
Extended}, Corollary 2.7], $T+Q$ does not satisfy property $(gab)$.
But since $T$ has SVEP, $T$ satisfies Browder's theorem or
equivalently, by [\cite{AmouchM ZguittiH equivalence}, Theorem 2.2],
$T$ satisfies generalized Browder's theorem. Therefore by
[\cite{Berkani-Zariouh New Extended}, Corollary 2.7] again, $T$
satisfies property $(gb)$.
\end{upshape}

\end{example}

The following example was given in [\cite{Zeng-Zhong}, Example 2.12]
to show that property $(gb)$ is not preserved under commuting finite
rank perturbations. Now, we use it to illustrate that property $(b)$
and $(ab)$ are also unstable under commuting finite rank (hence
compact) perturbations.

\begin {example}${\label{3.14}}$ \begin{upshape}
Let $U: l_{2}(\mathbb{N}) \longrightarrow l_{2}(\mathbb{N})$ be the
unilateral right shift operator defined by
$$U(x_{1},x_{2},\cdots )=(0,x_{1},x_{2}, \cdots ) \makebox{\ \ \  for all \ } (x_{n}) \in
l_{2}(\mathbb{N}).$$ For fixed $0 < \varepsilon < 1$, let
$F_{\varepsilon}: l_{2}(\mathbb{N}) \longrightarrow
l_{2}(\mathbb{N})$ be a finite rank operator defined by
$$F_{\varepsilon}(x_{1},x_{2},\cdots )=(-\varepsilon x_{1},0,0, \cdots ) \makebox{\ \ \  for all \ } (x_{n}) \in
l_{2}(\mathbb{N}).$$ We consider the operators $T$ and $F$ defined
by $T=U \oplus I$ and $F=0 \oplus F_{\varepsilon}$, respectively.
Then $F$ is a finite rank operator and $TF=FT$. Moreover,
$$\sigma(T) = \sigma(U) \cup \sigma(I) = \{\lambda \in
\mathbb{C}: 0 \leq |\lambda| \leq 1 \},$$
$$\sigma_{a}(T) = \sigma_{a}(U) \cup \sigma_{a}(I) = \{\lambda \in
\mathbb{C}: |\lambda| = 1 \} ,$$
$$\sigma(T+F) = \sigma(U) \cup
\sigma(I+F_{\varepsilon}) = \{\lambda \in \mathbb{C}: 0 \leq
|\lambda| \leq 1 \}$$ and
$$\sigma_{a}(T+F) = \sigma_{a}(U) \cup \sigma_{a}(I+F_{\varepsilon}) = \{\lambda \in
\mathbb{C}: |\lambda| = 1 \} \cup \{ 1 - \varepsilon \}.$$ It
follows that $\Pi_{a}^{0}(T)=\Pi^{0}(T)= \varnothing$ and $\{1 -
\varepsilon \}= \Pi_{a}^{0}(T+F) \neq \Pi^{0}(T+F)= \varnothing.$
Hence by [\cite{Berkani-Zariouh Extended}, Corollary 2.7] (resp.
[\cite{Berkani-Zariouh New Extended}, Corollary 2.6]), $T+F$ does
not satisfy property $(b)$ (resp. $(ab)$). But since $T$ has SVEP,
$T$ satisfies a-Browder's theorem (resp. Browder's theorem),
therefore by [\cite{Berkani-Zariouh Extended}, Corollary 2.7] (resp.
[\cite{Berkani-Zariouh New Extended}, Corollary 2.6]) again, $T$
satisfies property $(b)$ (resp. $(ab)$).

  \end{upshape}

\end{example}



\end{document}